\theoremstyle{plain}
\newtheorem{Thm}{\indent\sc Theorem}[section]
\newtheorem{Prop}[Thm]{\indent\sc Proposition}
\newcommand{\mcal}[1]{\mathcal{#1}}            
\newcommand{\msi}[1]{\text{\small $#1$}}       
\newcommand{\msii}[1]{\text{\scriptsize $#1$}} 
\newcommand{\nfrac}[2]{{\displaystyle\frac{\raisebox{-.25ex}{${}\,#1\,{}$}}{{}\,#2\,{}}}}  
\newcommand{\Z}{\mathbb{Z}}              
\newcommand{\C}{\mathbb{C}}              
\newcommand{\rank}{\mathrm{rank\,}}      
\newcommand{\PS}[1][]{\mathbb{P}_{#1}}   
\newcommand{\iu}{\mathrm{\,i\,}}         
\newcommand{\tsum}{\textstyle\sum\limits} 
\newcommand{\dsum}{\displaystyle\sum}     
\newcommand{\WC}[1]{\mcal{#1}}           
\newcommand{\PC}[1]{\mathtt{#1}}         
\newcommand{\ES}{S}                      
\newcommand{\MS}{\mcal{M}}               
\newcommand{\kt}[1]{\mathrm{#1}}         
\newcommand{\LB}[1]{\mcal{#1}}           
\newcommand{\NS}{\mathrm{NS}}            
\newcommand{\MW}{\mathrm{MW}}            
\newcommand{\Nu}{\Sigma}                 
\newcommand{\Pic}{\mathrm{Pic}}          
\newcommand{\disc}{\Delta}               
\newcommand{\RS}[1]{\mathrm{#1}}         
\newcommand{\rt}{r}                      
\newcommand{\dcl}{l}                     
\begin{document}
%
\title[On Families of Rational Elliptic Surfaces]%
      {\boldmath On Families of Rational Elliptic Surfaces \\
       with $J$-Invariant Functions of Degree One}
\author[T. Kitazawa]%
       {Takashi Kitazawa}
\address{
         College of Science and Engineering, 
         Ritsumeikan University \endgraf
         1-1-1 Nojihigashi, Kusatsu, Shiga, 525-8577, 
         Japan}
\email{takashi.kitazawa@gmail.com}
%
%
\subjclass[2010]{Primary 14J27; 
                 Secondary 14D05.}
\keywords{Rational elliptic surface, 
          Mordell-Weil group, 
          Family, 
          Monodromy}
%
%
\begin{abstract}
This paper deals with a study of the rational elliptic surfaces 
whose $J$-invariant functions are of degree one. 
Almost all of these elliptic surfaces have four singular fibers, 
while the remaining surfaces have only three singular fibers. 
The moduli space of these elliptic surfaces 
is canonically isomorphic to the projective line 
by taking the $J$-values for a certain fixed type of singular fibers. 
Over the moduli space, we discuss our elliptic surfaces, 
and investigate how their sections 
are described by the parameter of the moduli space. 
By using a covering space of the moduli, 
we construct a family of our representative elliptic surfaces 
whose sections are described rationally 
by the parameter of the covering space. 
We discuss it in association 
with invariants of the regular octahedron.
\end{abstract}
\maketitle
\tableofcontents
%
%
\section*{Introduction}\label{Intro}%
%
Throughout this paper, 
we work over the complex ground field $\C$ 
(though our results might hold 
for other fields of characteristic $p \ne 2,3$). 
An elliptic surface $\ES$ refers to 
a smooth compact surface 
having a given elliptic fibration ${\varphi : \ES \to C}$ 
over a smooth compact base curve $C$. 
We assume that the fibration has a section 
${s_0 : C \to \ES}$ ${(\varphi \,\msii{\circ}\, s_0 = \mathrm{id})}$ 
which we identify with its image in $\ES$, 
and we fix it as the $0$-section of $S$. 
(Thus, we consider each regular fiber to be an elliptic curve 
endowed with the origin of the group structure, 
which is the intersection point of the fiber and the $0$-section.)
\par
Elliptic surfaces have been studied in many papers, 
including the classification of structures 
such as singular fibers and sections. 
In \cite{Ko}, Kodaira classified 
the singular fibers appearing in elliptic surfaces 
into their types with local invariants.
Miranda and Persson determined 
the possible configurations of singular fibers 
for rational elliptic surfaces in \cite{MP1},\,\cite{Pe}, 
and for semi-stable elliptic K3 surfaces in \cite{MP2}. 
For rational elliptic surfaces, 
the Mordell-Weil groups of sections 
with lattice structures 
were classified by Oguiso and Shioda in \cite{OS}. 
The list of extremal elliptic K3 surfaces 
(with Mordell-Weil groups and transcendental lattices) 
was given by Shimada and Zhang in \cite{MZ}.
\par
In order to study elliptic surfaces in detail, 
it would be advantageous to observe families of them. 
We will pick up here rational elliptic surfaces 
having $J$-invariant functions of degree one, 
and discuss them as a family of elliptic surfaces. 
This family has a most simple structure for $J$-invariant functions, 
and many other elliptic surfaces arise from it 
by using operations such as base changes and quadratic twists.
Our purpose of this paper is to describe 
several structures of this family 
(as a base for other elliptic surfaces). 
As mentioned below, the type $\mathrm{I}_0^*$ 
for Kodaira's classification of singular fibers 
plays a central role in the study of this paper. 
We investigate how our elliptic surfaces of the family are controlled 
by this type singular fibers. 
\par
For rational elliptic surfaces 
with $J$-invariant functions of degree one, 
each base curve is canonically isomorphic 
to the projective line ${\PS[1] = \PS[1](\C)}$ 
by assigning the values of the $J$-function. 
Then the base curve is considered to be the modular curve of level one
as the compactified moduli space of the elliptic curves, 
and our elliptic surfaces give elliptic fibrations 
over this modular curve.
Almost all our elliptic surfaces have four singular fibers of types 
$\mathrm{I}_1, \mathrm{II}, \mathrm{III}, \mathrm{I}_0^*$, 
and in the remaining (three) particular cases, 
they have three singular fibers of types 
$\mathrm{I}_1^*, \mathrm{II}, \mathrm{III}$, 
or $\mathrm{I}_1, \mathrm{IV}^*, \mathrm{III}$, 
or $\mathrm{I}_1, \mathrm{II}, \mathrm{III}^*$. 
The moduli space of our elliptic surfaces 
is isomorphic to the projective line $\PS[1]\!$ 
($\cong$ the modular curve of level one), 
which is canonically parametrized by taking the $J$-values 
along the singular fibers of type $\mathrm{I}_0^*$ 
($\,\mathrm{I}_1^*, \mathrm{IV}^*, \mathrm{III}^*$ in the particular cases). 
\par
In this paper, 
we will closely describe these rational elliptic surfaces 
with $J$-functions of degree one.  
In particular, we will investigate 
how their sections are described in terms of coordinates, 
depending on the moduli space of our elliptic surfaces. 
It might not be described rationally 
by the parameter of the moduli space 
(with some monodromy phenomenon in sections). 
Our main result is given by Theorem \ref{MainThm}.
By using a octahedral covering space of the moduli, 
we construct a family of our elliptic surfaces 
for which their sections are described rationally 
by the parameter of the covering space. 
This covering space of the moduli is branched 
at the three points of $J$-values $\infty, 0,1$, 
which correspond to the above mentioned 
three particular cases of our elliptic surfaces, 
and they are associated with 
the edges, faces, vertices of the regular octahedron.  
\par
Under the above result, we investigate 
how the sections of our elliptic surfaces pass through a fixed fiber. 
In Theorem \ref{Thm2}, we describe it for regular fibers 
in terms of Weierstrass $\wp$-functions of complex tori. 
We mention it in association with 
the $2$-division points of the elliptic curves 
determined by the singular fibers of type $\kt{I}_0^*$.
In Theorem \ref{Thm1}, 
for singular fibers of types $\kt{I}_1$ and $\kt{II}$, 
we discuss the intersection points passed through by the sections. 
We describe them in the complex numbers 
by using certain polynomials of degree $4$ 
defined on  our octahedral covering space of the moduli. 
\par 
In Section \ref{Sec:1}, 
we recall several facts as preliminaries 
concerning rational elliptic surfaces. 
In Section \ref{Sec:2}, 
we construct rational elliptic surfaces 
with $J$-functions of degree one 
by using Weierstrass models 
together with birational correspondence to pencils of cubic curves 
on the projective plane ${\PS[2] = \PS[2](\C)}$.
In Section \ref{Sec:3}, 
we describe sections for our rational elliptic surfaces 
in the sense of divisor classes,
which is associated with the blowing-up of $\PS[2]$ 
as the resolution of the nine base points for our cubic pencil. 
In Section \ref{Sec:4}, 
we present explicit coordinate representation 
for sections of our elliptic surfaces 
by using the octahedral covering space of the moduli.
In Section \ref{Sec:5}, 
we describe intersection points 
which are passed through by sections in a fixed fiber.
In Section \ref{Sec:6}, 
we give a characterization of our octahedral covering space 
with a relationship to the modular curve 
of the principal congruence subgroup of level $4$, 
which we describe through a different point of view 
from the type $\kt{I}_0^*$ of singular fibers. 
In Appendix, we mention the group action of covering transformations 
on the polynomials used in this paper.
\section{Rational elliptic surfaces}\label{Sec:1}%
We will recall several facts concerning rational elliptic surfaces 
(see \cite{Mi}, \cite{SS} for details). 
Let $\ES$ be a (relatively minimal) rational elliptic surface 
with given $0$-section $s_0$,  
(where the minimality means that no $(-1)$-curve 
is contained in any fiber of the fibration ${\varphi : \ES \to C\,}$). 
The elliptic surface $\ES$ (over the ground field $\C\,$) 
is described as the (minimal) desingularization 
of the Weierstrass model defined by the form 
\begin{equation}\label{Weierstrass}
{}\hspace{8ex}
\WC{Y}^2\WC{Z} 
= 4\WC{X}^3 - g_2 \WC{XZ}^2 -g_3 \WC{Z}^3
\ \ \ \ \ (\text{\,with $s_0$ as $\WC{X}=\WC{Z}=0\,$})
\end{equation}
in the $\PS[2]$-bundle 
${\PS(\LB{L}^2 \oplus \LB{L}^3 \oplus \LB{O}_C)}$ 
over the base curve $C$ (cf.\ \cite{Ka}). 
In this model, 
$\LB{L}$ is the line bundle 
determined by the dual of the higher direct image $R^1 \varphi_* (\LB{O}_S)$, 
and $g_2, g_3$ are global sections of 
$\LB{L}^4, \LB{L}^6 \,
 \big( g_2 \,{\in}\, H^0 (C, \LB{L}^4),\, 
       g_3 \,{\in}\, H^0 (C, \LB{L}^6) \big)$ 
with the following conditions:
\begin{enumerate}
\item The discriminant $\disc := g_2^{\,3} - 27 g_3^{\,2} 
      \,\big( {\in}\, H^0 (C, \LB{L}^{12}) \big)$ 
      is not identically zero.
\item The vanishing orders $v_p(g_2), v_p(g_3)$ of $g_2, g_3$ 
      satisfy $v_p(g_2) < 4$ or $v_p(g_3) < 6$\\
      at any point $p$ on $C$.
\end{enumerate}
By the rationality of $\ES$, 
we have ${C \cong \PS[1]}$ and ${\LB{L} \cong \LB{O}_{\PS[1]}(1)}$, 
and then the Weierstrass coefficients $g_2, g_3$ are considered to be 
binary homogeneous polynomials of degree $4, 6$ 
with respect to homogeneous coordinates of $C$ 
(which we denote by ${(\PC{S} : \PC{T})}$). 
The Weierstrass model (\ref{Weierstrass}) 
is locally represented in the direct product ${\C \times \PS[2]}$, 
that is, represented by the following local form over the patch of $C$ 
with local coordinate $\frac{\,\PC{T}\,}{\,\PC{S}\,}$:
\begin{equation*}
Y_1^{\,2} Z_1^{}
 = 4 X_1^{\,3} - \nfrac{{g\mathstrut}_2}{\PC{S}^4\!} X_1^{} Z_1^{\,2} 
               - \nfrac{{g\mathstrut}_3}{\PC{S}^6\!} Z_1^{\,3}
\ \ \text{with}\ \ 
(X_1 : Y_1 : Z_1) 
= \Big( \nfrac{\WC{X}}{\,\PC{S}^2\!} 
        : \nfrac{\WC{Y}}{\,\PC{S}^3\!} 
        : \nfrac{\WC{Z}}{1\mathstrut} \Big),
\end{equation*}
and represented by the following local form 
over the patch with local coordinate $\frac{\,\PC{S}\,}{\,\PC{T}\,}$:
\begin{equation*}
Y_2^{\,2} Z_2^{}
 = 4 X_2^{\,3} - \nfrac{{g\mathstrut}_2}{\PC{T}^4\!} X_2^{} Z_2^{\,2} 
               - \nfrac{{g\mathstrut}_3}{\PC{T}^6\!} Z_2^{\,3}
\ \ \text{with}\ \ 
(X_2 : Y_2 : Z_2) 
= \Big( \nfrac{\WC{X}}{\,\PC{T}^2\!} 
        : \nfrac{\WC{Y}}{\,\PC{T}^3\!} 
        : \nfrac{\WC{Z}}{1\mathstrut} \Big).
\end{equation*}
They are patched up 
by the following relations of the locally trivialized coordinates:
\begin{equation*}
\nfrac{\PC{S}}{\PC{T}} = \Big( \nfrac{\PC{T}}{\PC{S}} \Big)^{\!\!-1} 
\ \ \text{and} \ \ 
(X_2 : Y_2 : Z_2 ) = 
\Big( \Big( \nfrac{\PC{T}}{\PC{S}} \Big)^{\!\!-2} X_1  
 : \Big( \nfrac{\PC{T}}{\PC{S}} \Big)^{\!\!-3} Y_1  
 : Z_1 \Big).
\end{equation*}
\par
For the Weierstrass model (\ref{Weierstrass}) 
of the elliptic surface $\ES$, 
we can read off the fiber types of Kodaira's classification 
appearing over the points on the base curve $C$. 
In fact, the fiber type over a point ${p \in C}$ 
is determined by the vanishing orders 
$v_p(g_2), v_p(g_3), v_p(\disc)$ 
as shown in Table \ref{Kodaira} (cf.\,\cite{MP1}). 
In this table, the type $\kt{I}_0$ 
means a regular fiber (elliptic curve), 
and the other types mean singular fibers. 
In detail, 
we consider the type $\kt{I}_1$ for a nodal rational curve, 
the type $\kt{II}$ for a cuspidal rational curve, 
the type $\kt{III}$ for the collection of two tangential rational curves, 
the type $\kt{IV}$ for the collection of three concurrent rational curves, 
and the types ${\kt{I}_n \,(\,n \ge 2\,)}$, ${\kt{I}_n^* \,(\,n \ge 0\,)}$, 
$\kt{IV}^*$, $\kt{III}^*$, $\kt{II}^*$ 
for the collections of several rational curves with intersection forms
of the extended diagrams of types 
$\widetilde{\RS{A}}_{n-1}$, $\widetilde{\RS{D}}_{n+4}$, $\widetilde{\RS{E}}_6$, 
$\widetilde{\RS{E}}_7$, $\widetilde{\RS{E}}_8$, respectively 
(in the usual sense). 
\begin{table}[htb]
\centering\normalsize
\caption{Kodaira's fiber types}
\label{Kodaira}
$\begin{array}{@{\ }c@{\ } *{5}{| @{\ }c@{\ }} }
\text{type} & v_p(g_2) & v_p(g_3) & v_p(\disc) & J(p) & e_p(J)\\
\hline
\hline
         & 0     & 0     & 0 & \ne 0,1,\infty & \ge 1 \\
\kt{I}_0 & \ge 1 & 0     & 0 & 0              & 3v_p(g_2) \\
         & 0     & \ge 1 & 0 & 1              & 2v_p(g_3) \\
\hline
\kt{I}_n (n \ge 1) & 0  & 0  & n & \infty & n \\
\hline
           & 2     & 3     & 6 & \ne 0,1,\infty & \ge 1 \\
\kt{I}_0^* & \ge 3 & 3     & 6 & 0              & 3v_p(g_2)-6 \\
           & 2     & \ge 4 & 6 & 1              & 2v_p(g_3)-6 \\
\hline
\kt{I}_n^* (n \ge 1) & 2  & 3  & n+6 & \infty & n \\
\hline
\kt{II}  & \ge 1 & 1     & 2  & 0  & 3v_p(g_2)-2 \\
\hline
\kt{III} & 1     & \ge 2 & 3  & 1  & 2v_p(g_3)-3 \\
\hline
\kt{IV}  & \ge 2 & 2     & 4  & 0  & 3v_p(g_2)-4 \\
\hline
{}\ \kt{IV}^*  & \ge 3 & 4     & 8  & 0  & 3v_p(g_2)-8 \\
\hline
{}\ \kt{III}^* & 3     & \ge 5 & 9  & 1  & 2v_p(g_3)-9 \\
\hline
{}\ \kt{II}^*  & \ge 4 & 5     & 10 & 0  & 3v_p(g_2)-10 \\
\hline
\end{array}$
\end{table}
\par
In Table \ref{Kodaira}, 
we have additional information concerning $J(p)$ and $e_p(J)$. 
These refer to the value and the ramification index at ${p \in C}$ 
for the $J$-invariant of $\ES$, 
where the $J$-invariant is the rational function 
(as the regular mapping from $C$ to $\PS[1]$)
determined by the values of $J$-modulus 
for the elliptic curves as the regular fibers. 
On the Weierstrass model (\ref{Weierstrass}), 
the $J$-invariant is represented by the following form:
\begin{equation}
{}\qquad\qquad
J = \nfrac{{g\mathstrut}_2{}^{\!\!3}}
          {\disc} 
  = \nfrac{{g\mathstrut}_2{}^{\!\!3}}
          {{g\mathstrut}_2{}^{\!\!3} - 27 {g\mathstrut}_3{}^{\!\!2}} 
\qquad ( \text{\,with}\ j = 1728\,J\,).
\end{equation}
Since the elliptic surface $\ES$ is rational, 
the $J$-invariant function ${J : C \to \PS[1]}$ 
is at most of degree $12$. 
In particular, it is of degree $1$ in the study of this paper.
\par
Next, we recall another description 
of rational elliptic surfaces. 
We can describe the surface $\ES$ 
as a blowing-up of $\PS[2]$ (successively in nine points). 
The center of the blowing-up consists 
of (possibly infinitely near) nine base points 
determined by a pencil of plane cubic curves.
In this description, 
the fibers of $S$ are described from the cubic curves in the pencil, 
and the $0$-section $s_0$ is given 
by the last ($9$-th) exceptional curve of the blowing-up.
\par
The N\'eron-Severi group (N\'eron-Severi lattice)
$\NS(\ES) \,\big( {\cong}\ \Pic(\ES)/\Pic^0(\ES) \big)$ 
is isometric to the unimodular odd lattice 
of rank $10$ with signature $(1,9)$. 
It is generated by the divisor classes $\dcl$ and $e_1, \cdots{}, e_9$ 
associated with a line in $\PS[2]$ 
and the nine exceptional curves of the blowing-up 
(more precisely, the total transforms of them).
We set here $r_{ij}, r_{ijk}$ and $f$ to be the following classes:
\begin{equation*}
r_{ij} = e_i - e_j,\ \ \ 
r_{ijk} = \dcl - e_i - e_j - e_k, \ \ \ 
f = 3\dcl - \tsum_{k=1}^9 e_k.
\end{equation*}
The class of the fibers 
for the elliptic fibration ${\varphi : \ES \to C}$ 
is given by $f$, which is the anti-canonical class of $\ES$. 
The fiber class $f$ and the $0$-section $s_0$ 
generate a unimodular sublattice of $\NS(\ES)$ with signature $(1,1)$, 
which we denote by ${U = \langle\, f, s_0 \,\rangle}$. 
Then the orthogonal complement $U^{\bot}$ is isometric 
to the negative definite root lattice of type $\RS{E}_8$. 
Its fundamental system of roots is given by 
$r_{12}, \cdots{}, r_{78}$ and $r_{123}$ with the diagram below.
\begin{equation}\label{EE6}
 \begin{array}{c *{12}{@{\ }c}}
  r_{12} & \rule[.5mm]{1em}{.8pt} &
  r_{23} & \rule[.5mm]{1em}{.8pt} &
  r_{34} & \rule[.5mm]{1em}{.8pt} &
  r_{45} & \rule[.5mm]{1em}{.8pt} &
  r_{56} & \rule[.5mm]{1em}{.8pt} & 
  r_{67} & \rule[.5mm]{1em}{.8pt} &
  r_{78} \ (\,\rule[.5mm]{1em}{.8pt} \ r_{89} )\\
   & & & & \rule[-1mm]{.8pt}{1em} & & & & & & & & \\
   & & & & \rt_{123} & & & & & & & & \\
 \end{array}
\end{equation}
The above extended diagram of type $\widetilde{\RS{E}}_8$, 
to which $r_{89}$ is added, 
generates the direct sum 
$\langle \,f\, \rangle \oplus (U^{\bot})$ 
(which coincides with $\langle \,f\, \rangle^{\bot}$). 
\par
The orthogonal projection 
from ${\NS(\ES) = U \oplus (U^\bot})$ onto $U^\bot$ 
induces a one-to-one correspondence onto $U^\bot$ 
by restricting the projection within the subset 
$\Nu(\ES) \,\big( {\subset}\,\NS(\ES) \big)$ 
of numerical sections of $\ES$, 
where a numerical section $s$ is 
an effective divisor (class in $\NS(\ES)$) 
with intersection numbers $(s \cdot f)=1$ and $(s \cdot s)=-1$ 
(cf.\ \cite{MoP}). 
For an element $r$ in $U^\bot$, 
the corresponding numerical section $s$ is given by the following:
\begin{equation*}
{}\qquad
s = s_0 - \msi{\nfrac{1}{2}} (r \cdot r) f + r \qquad
\Big({\,\Leftrightarrow\,}\ 
r = s - s_0 + \msi{\nfrac{1}{2}} \big( (s-s_0) \!\cdot\! (s-s_0) \big) f 
\ \Big),
\end{equation*}
in which ${(r \cdot r) \,\big({=}\ (s-s_0) \!\cdot\! (s-s_0) \big)}$ 
is an even integer since the root lattice $U^\bot$ is an even lattice. 
\par
Although $\Nu(\ES)$ is not a subgroup of $\NS(\ES)$, 
there is a group structure induced from $U^\bot$ onto $\Nu(\ES)$ 
under the one-to-one correspondence.
For numerical sections $s_1, s_2$ in $\Nu(\ES)$ 
(corresponding to $r_1, r_2$ in $U^{\bot}$), 
we have $s_1 \,\dot{+}\, s_2$ as follows:
\begin{equation*}
\begin{array}{l@{\ }c@{\ }l}
s_1 \,\dot{+}\, s_2 
 &=& s_0 - \msi{\nfrac{1}{2}} \big( (r_1+r_2) \!\cdot\! (r_1+r_2) \big) f 
     + r_1 + r_2 \\[1ex]
 &=& s_0 + (s_1-s_0) + (s_2-s_0) 
     - \big( (s_1-s_0) \!\cdot\! (s_2-s_0) \big) f.
\end{array}
\end{equation*}
On each regular fiber, this addition is 
just the group law of the elliptic curve.
\par
Numerical sections might not be irreducible, 
which could contain fiber components of the elliptic fibration.
The factor group of $\Nu(\ES)$ 
by the equivalence relation modulo the fiber components 
is isomorphic to the group of irreducible sections 
(taking $s_0$ as the zero), 
which is the Mordell-Weil group $\MW(\ES)$ 
of the elliptic surface $S$. 
Under the one-to-one correspondence between $\Nu(\ES)$ and $U^\bot$, 
the Mordell-Weil group is isomorphic to a factor group 
by a sublattice $L$ in the root lattice $U^\bot$ of type $\RS{E}_8$. 
This sublattice $L$ is generated 
by the fiber components not meeting the $0$-section $s_0$.
We then have the following isomorphisms: 
\begin{equation}
\MW(\ES) \,\cong\, U^\bot / L \,\cong\, \NS(\ES) / (U \oplus L).
\end{equation}
Moreover, according to Shioda's work (cf.\ \cite{Shi}), 
the Mordell-Weil group $\MW(\ES)$ (up to torsion) 
has a lattice structure (generally not integral), 
which is the (sign changed) dual lattice 
of the orthogonal complement $(U \oplus L)^{\bot}$. 
By the above isomorphisms, 
we have the following relation (usually called the Shioda-Tate formula):
\begin{equation}\label{Shioda-Tate}
\rank \NS(\ES) = 2 + \rank L + \rank \MW(\ES).
\end{equation}
In particular, 
we have $\rank \MW(\ES) = 8 - \rank L$ 
for the rational elliptic surface $\ES$ 
(since we have $\rank \NS(\ES) = 10$).
\section{Elliptic fibrations of degree one}\label{Sec:2}%
In this section, 
we will describe rational elliptic surfaces 
with $J$-functions of degree one. 
Let $\ES$ be such a rational elliptic surface. 
Then the $J$-function gives 
an isomorphism from the base curve $C$ onto $\PS[1]$ 
together with the ramification index ${e_p(J) = 1}$ for any point $p \in C$.
Thus, the appearing fiber type over the point $p$ 
is restricted within the two types shown in Table \ref{FiberType} 
depending on the $J$-value $J(p)$. 
\begin{table}[htb]
\centering\normalsize
\caption{Fiber types for elliptic fibrations of degree one}%
\label{FiberType}%
$\begin{array}{@{\ }c@{\ } *{4}{| @{\ }c@{\ }} }
\text{type} & v_p(g_2) & v_p(g_3) & v_p(\disc) & J(p) \\
\hline
\hline
\kt{I}_0 & 0  & 0  & 0  & \ne 0,1,\infty \\
\kt{I}_0^* & 2  & 3  & 6  & \ne 0,1,\infty \\
\hline
\kt{I}_1   & 0  & 0  & 1 & \infty \\
\kt{I}_1^* & 2  & 3  & 7 & \infty \\
\hline
\end{array} \ \ \ 
\begin{array}{@{\ }c@{\ } *{4}{| @{\ }c@{\ }} }
\text{type} & v_p(g_2) & v_p(g_3) & v_p(\disc) & J(p) \\
\hline
\hline
\kt{II}   & 1  & 1  & 2  & 0 \\
{}\ \kt{IV}^* & 3  & 4  & 8  & 0 \\
\hline
\kt{III}   & 1  & 2  & 3 & 1 \\
{}\ \kt{III}^* & 3  & 5  & 9 & 1 \\
\hline
\end{array}$%
\end{table}
\par
The Euler number of $\ES$ is equal to $12$ 
(for the rationality of $\ES$), 
and it coincides with the sum of Euler numbers 
for the singular fibers of $\ES$, 
which is the same to the sum of vanishing orders $v_p(\disc)$. 
Therefore, by combinatorial discussion, 
the configurations of singular fibers 
are restricted within the following four cases: 
\begin{equation*}
[\, \kt{I}_1, \kt{II}, \kt{III}, \kt{I}_0^*\,], \ \ 
[\, \kt{I}_1^*, \kt{II}, \kt{III} \,], \ \ 
[\, \kt{I}_1, \kt{IV}^*, \kt{III} \,], \ \ 
[\, \kt{I}_1, \kt{II}, \kt{III}^* ].
\end{equation*}
(In Persson's list of \cite{Pe}, 
these configurations are realized, 
discussing double covers of $\PS[2]$ 
branched along plane quartic curves.) 
%
%
\par
The first case $[\, \kt{I}_1, \kt{II}, \kt{III}, \kt{I}_0^* \,]$ 
means that $\ES$ has four singular fibers of types 
$\kt{I}_1, \kt{II}, \kt{III}$ and $\kt{I}_0^*$. 
The $J$-values of the fibers 
$\kt{I}_1, \kt{II}, \kt{III}$ are given by $\infty, 0, 1$, 
and the $J$-value $J_0$ of the fiber $\kt{I}_0^*$ 
is given by some complex number ${(\ne \infty, 0, 1)}$. 
As the cases for ${J_0 = \infty, 0, 1}$, 
we have the remaining three cases 
$[\, \kt{I}_1^*, \kt{II}, \kt{III} \,]$, 
$[\, \kt{I}_1, \kt{IV}^*, \kt{III} \,]$, 
$[\, \kt{I}_1, \kt{II}, \kt{III}^* ]$. 
They are considered to be the cases that 
the singular fiber $\kt{I}_0^*$ in the general case 
$[\, \kt{I}_1, \kt{II}, \kt{III}, \kt{I}_0^* \,]$ 
becomes confluent with one of the other singular fibers 
$\kt{I}_1, \kt{II}, \kt{III}$ in the following sense: 
\begin{equation*}
\kt{I}_0^* + \kt{I}_1 \to \kt{I}_1^*, \ \ \ 
\kt{I}_0^* + \kt{II} \to \kt{IV}^*, \ \ \ 
\kt{I}_0^* + \kt{III} \to \kt{III}^*,
\end{equation*}
which are consistent with respect to 
the vanishing orders $v_p(g_2), v_p(g_3), v_p(\disc)$ 
mentioned in Table \ref{FiberType}.
We note that the above confluences are in restricted cases 
by the condition of degree-one $J$-function, 
and thus isotrivial cases (having constant $J$-functions) 
are not taken into consideration in this paper. 
\par
We now construct $\ES$ by using the Weierstrass model. 
Since the base curve $C$ is considered to be $\PS[1]$ 
and the $J$-function is of degree one, 
we give homogeneous coordinates ${(\PC{S}:\PC{T})}$ to $C$ 
such that ${J = \frac{\,\PC{T}\,}{\,\PC{S}\,}}$. 
The Weierstrass coefficients $g_2, g_3$ are represented
by homogeneous polynomials of degree $4, 6$ 
with respect to ${(\PC{S} : \PC{T})}$, 
and then we have the following:
%
%
\begin{Prop}
For a rational elliptic surface $\ES$ with $J$-function of degree one, 
its Weierstrass model is given by the following form 
$($on the canonical homogeneous coordinates ${(\PC{S} : \PC{T})}$ 
of the base curve $C$ with $J$-function 
${J = \frac{\,\PC{T}\,}{\,\PC{S}\,}}){:}$
\begin{equation}\label{WModel}
\WC{Y}^2\WC{Z} 
= 4\WC{X}^3 - g_2 \WC{XZ}^2 -g_3 \WC{Z}^3 \ \ \ 
\Bigg(
\begin{array}{@{\,}l@{}}
g_2 = 27\,\PC{T}\,(\PC{T} - \PC{S}) (a\,\PC{T} - b\,\PC{S} )^2 \\[1ex]
g_3 = 27\,\PC{T}\,(\PC{T} - \PC{S})^2 (a\,\PC{T} - b\,\PC{S} )^3
\end{array}
\Bigg),
\end{equation}
where ${(a,b) \,\big({\ne}\,(0,0) \big)}$ is considered 
to be a homogeneous representative 
of a point on $C\,({\cong}\,\PS[1])$. 
This point $($equivalently 
the inhomogeneous value ${J_0 = \frac{\,b\,}{a}})$ 
determines that the elliptic surface $\ES$ is in the case 
$[\, \kt{I}_1^*, \kt{II}, \kt{III} \,]$, 
$[\, \kt{I}_1, \kt{IV}^*, \kt{III} \,]$, 
$[\, \kt{I}_1, \kt{II}, \kt{III}^* ]$, or 
$[\, \kt{I}_1, \kt{II}, \kt{III}, \kt{I}_0^*\,]$ 
for ${J_0 = \infty, 0, 1,}$ or the other values, respectively.
\end{Prop}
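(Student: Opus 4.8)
The plan is to reverse-engineer the Weierstrass coefficients from the required singular-fiber data. The key constraint is that $J = \frac{\,\PC{T}\,}{\,\PC{S}\,}$ must have degree one, so by the formula $J = \frac{g_2^{\,3}}{g_2^{\,3} - 27 g_3^{\,2}}$ I would read off the two polynomial identities that this imposes. First I would observe that $J = \frac{\,\PC{T}\,}{\,\PC{S}\,}$ forces $\PC{S}\cdot g_2^{\,3} = \PC{T}\cdot\disc = \PC{T}\,(g_2^{\,3} - 27 g_3^{\,2})$ as homogeneous polynomials (of degree $13$). Rearranging gives $(\PC{T} - \PC{S})\,g_2^{\,3} = 27\,\PC{T}\,g_3^{\,2}$, which is the single governing relation. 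From Table \ref{FiberType}, the fiber of type $\kt{I}_1$ (or $\kt{I}_1^*$) sits over $J=\infty$, i.e.\ over $\PC{S}=0$; the type $\kt{II}$ (or $\kt{IV}^*$) over $J=0$, i.e.\ $\PC{T}=0$; and the type $\kt{III}$ (or $\kt{III}^*$) over $J=1$, i.e.\ $\PC{T}-\PC{S}=0$.

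Next I would pin down the vanishing orders. Reading the $\kt{I}_0$ / $\kt{I}_0^*$ rows of Table \ref{FiberType} together with the general case $[\,\kt{I}_1,\kt{II},\kt{III},\kt{I}_0^*\,]$, the generic fiber must be of type $\kt{I}_0^*$, which means $g_2$ and $g_3$ vanish to orders $2$ and $3$ at the generic point---that is, $g_2$ and $g_3$ must be divisible by the square, respectively cube, of a common linear form. Writing that form as $(a\,\PC{T} - b\,\PC{S})$ (the locus of the $\kt{I}_0^*$ fiber, whose $J$-value is $J_0 = \frac{b}{a}$), and imposing the required orders at $\PC{S}=0$, $\PC{T}=0$, $\PC{T}-\PC{S}=0$ from the table, I would deduce the exponent pattern. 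Concretely: matching $v_p(g_2),v_p(g_3)$ for $\kt{II}$ at $\PC{T}=0$ gives factors $\PC{T}^1,\PC{T}^1$; for $\kt{III}$ at $\PC{T}-\PC{S}=0$ gives $(\PC{T}-\PC{S})^1,(\PC{T}-\PC{S})^2$; and the $\kt{I}_0^*$ locus contributes $(a\,\PC{T} - b\,\PC{S})^2,(a\,\PC{T} - b\,\PC{S})^3$. By degree count ($g_2$ has degree $4$, $g_3$ degree $6$) these factors are forced and leave only a constant to determine.

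The remaining step is to fix the scaling constant, which I would do by substituting the candidate forms into the governing relation $(\PC{T} - \PC{S})\,g_2^{\,3} = 27\,\PC{T}\,g_3^{\,2}$: a short computation shows the $\PC{T}$, $(\PC{T}-\PC{S})$ and $(a\,\PC{T}-b\,\PC{S})$ powers balance automatically, and the numerical factors force the overall coefficient of both $g_2$ and $g_3$ to be $27$, giving exactly (\ref{WModel}). Finally I would verify the conditions (1) and (2) guaranteeing a minimal Weierstrass model---$\disc\not\equiv 0$ and no point with $v_p(g_2)\ge 4$ \emph{and} $v_p(g_3)\ge 6$---and confirm, by inspecting the confluences $\kt{I}_0^* + \kt{I}_1 \to \kt{I}_1^*$, $\kt{I}_0^* + \kt{II} \to \kt{IV}^*$, $\kt{I}_0^* + \kt{III} \to \kt{III}^*$, that the specializations $J_0 = \infty,0,1$ (i.e.\ $(a,b)$ proportional to $(0,1),(1,0),(1,1)$) reproduce the three particular configurations. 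I expect the main subtlety to be not the algebra but the bookkeeping that the claimed form realizes precisely the Kodaira types asserted in the four cases, and in particular checking minimality at the $\kt{I}_0^*$ locus where the vanishing orders $(2,3,6)$ sit right at the boundary of the table.
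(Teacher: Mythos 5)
Your proposal is correct and follows essentially the same route as the paper: both determine $g_2 = k_2\,\PC{T}(\PC{T}-\PC{S})(a\,\PC{T}-b\,\PC{S})^2$ and $g_3 = k_3\,\PC{T}(\PC{T}-\PC{S})^2(a\,\PC{T}-b\,\PC{S})^3$ from the vanishing orders in Table \ref{FiberType} together with the degree count, and then impose the single scalar constraint $k_2^{\,3}=27\,k_3^{\,2}$ --- you extract it from the identity $(\PC{T}-\PC{S})\,g_2^{\,3}=27\,\PC{T}\,g_3^{\,2}$ encoding $J=\frac{\,\PC{T}\,}{\,\PC{S}\,}$, while the paper gets the same condition by factoring $\disc$ and requiring it to vanish at the $\kt{I}_1$ point. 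One small imprecision: $k_2^{\,3}=27\,k_3^{\,2}$ does not literally force $k_2=k_3=27$ (the solutions form the family $(k_2,k_3)=(27a^2,27a^3)$); the coefficient $27$ is a normalization obtained by absorbing the scalar into the homogeneous representative $(a,b)$ of the linear form, i.e.\ up to the quadratic twist (\ref{twist}), which is exactly the paper's step of setting $k_2, k_3$ proportional to $a^2, a^3$ with $b=aJ_0$.
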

\begin{proof}
In the case $[\, \kt{I}_1, \kt{II}, \kt{III}, \kt{I}_0^*\,]$ 
with the $J$-value ${J_0\,(\ne \infty, 0, 1)}$ of the fiber $\kt{I}_0^*$, 
the singular fibers $\kt{I}_1, \kt{II}, \kt{III}, \kt{I}_0^*$ 
appear at the points ${\frac{\,\PC{T}\,}{\,\PC{S}\,} = \infty, 0, 1, J_0}$ 
(since we have ${J = \frac{\,\PC{T}\,}{\,\PC{S}\,}}$ 
for the homogeneous coordinates ${(\PC{S}:\PC{T})}$ of $C\,$). 
By the vanishing orders mentioned above in Table \ref{FiberType}, 
the Weierstrass coefficients $g_2, g_3$ 
should be written as follows:
\begin{equation*}
g_2 = k_2\,\PC{T}\,(\PC{T} - \PC{S}) (\PC{T} - J_0\,\PC{S})^2, \ \ 
g_3 = k_3\,\PC{T}\,(\PC{T} - \PC{S})^2 (\PC{T} - J_0\,\PC{S})^3 \ \ \ 
(\,0 \ne k_2, k_3 \in \C\,),
\end{equation*}
and then the discriminant ${\disc = g_2^{\,3} - 27 g_3^{\,2}}$ 
is written by the following form:
\begin{equation*}
\disc = \PC{T}^2 (\PC{T} - \PC{S})^3 (\PC{T} - J_0\,\PC{S})^6 
        \{ (k_2^{\,3} - 27 k_3^{\,2})\,\PC{T} + 27 k_3^{\,2} \,\PC{S} \},
\end{equation*}
which must vanish at ${\frac{\,\PC{T}\,}{\,\PC{S}\,} = \infty}$ 
for the fiber $\kt{I}_1$. 
Thus, we have ${k_2^{\,3} = 27 k_3^{\,2}}$. 
By setting ${k_2 = 27 a^3}$, ${k_3 = 27 a^2}$ and ${b = a J_0}$, 
we obtain the Weierstrass model (\ref{WModel}). 
Similarly, we can determine $g_2, g_3$ 
in the remaining cases 
$[\, \kt{I}_1^*, \kt{II}, \kt{III} \,]$, 
$[\, \kt{I}_1, \kt{IV}^*, \kt{III} \,]$, 
$[\, \kt{I}_1, \kt{II}, \kt{III}^* ]$, 
which are obtained by ${\frac{b}{\,a\,} = \infty, 0, 1}$, respectively.
\end{proof}
%
\par
For the Weierstrass model (\ref{WModel}), if we exchange ${(a,b)}$ 
for another homogeneous representative ${(ka, kb)} \ {(k \in \C^*)}$, 
then $g_2, g_3$ are transformed to $k^2g_2, k^3g_3$, 
and they determine the same elliptic surface $\ES$ 
under the following twisting transformation (by ${\!\sqrt{k} \in \C\,}$) 
of the Weierstrass model:
\begin{equation}\label{twist}
(\,\WC{X} : \WC{Y} : \WC{Z}\,) \ \mapsto\  
( \sqrt{k}{}^{\,2} \WC{X} : \sqrt{k}{}^{\,3} \WC{Y} : \WC{Z}\,)
= (\, k \WC{X} : k \sqrt{k}\,\WC{Y} : \WC{Z}\,).
\end{equation}
Thus, the isomorphism classes of our rational elliptic surfaces 
(with $J$-functions of degree one) 
form a one-dimensional variety $\MS$
as the moduli space. 
This moduli space $\MS$ is realized to be the projective line $\PS[1]$ 
with homogeneous coordinates ${(a:b)}$, 
which is canonically parametrized by the $J$-value 
${J_0 = \frac{b}{\,a\,}}$ for the singular fiber $\kt{I}_0^*$ 
(${J_0 = \infty, 0, 1}$ for fibers 
$\kt{I}_1^*, \kt{IV}^*, \kt{III}^*$ in the particular cases). 
\par
It seems impossible to construct 
a three-dimensional total space 
as an analytic family representing our rational elliptic surfaces 
over the whole moduli space $\MS$. 
In fact, over the patch of $\MS$ 
with local coordinate $\frac{\,b\,}{a}$, 
we have the following local family 
of representative Weierstrass equations:
\begin{equation*}
\WC{Y}_1^{\,2}\WC{Z}_1^{} 
 = 4\WC{X}_1^{\,3} 
   - g_2^{_{(1)}} \WC{X}_1^{} \WC{Z}_1^{\,2} 
   - g_3^{_{(1)}} \WC{Z}_1^{\,3} 
\ \ \ 
\Big(\, g_2^{_{(1)}} = \nfrac{{g\mathstrut}_2}{a^2\!},\ 
        g_3^{_{(1)}} = \nfrac{{g\mathstrut}_3}{a^3\!} \,\Big),
\end{equation*}
and, over the patch with local coordinate $\frac{\,a\,}{b}$, 
we have the following local family:
\begin{equation*}
\WC{Y}_2^{\,2}\WC{Z}_2^{} 
 = 4\WC{X}_2^{\,3} 
   - g_2^{_{(2)}} \WC{X}_2^{} \WC{Z}_2^{\,2} 
   - g_3^{_{(2)}} \WC{Z}_2^{\,3} 
\ \ \ 
\Big(\, g_2^{_{(2)}} = \nfrac{{g\mathstrut}_2}{b^2\!},\ 
        g_3^{_{(2)}} = \nfrac{{g\mathstrut}_3}{b^3\!} \,\Big).
\end{equation*}
Then we have the following relations 
among the local coordinates of $\MS$ and 
the local versions of 
the Weierstrass coefficients $g_2, g_3$ of (\ref{WModel}):
\begin{equation*}
\nfrac{a}{b} = \Big( \nfrac{b}{a} \Big)^{\!\!-1}, \ \ \ 
g_2^{_{(2)}} = \Big( \nfrac{b}{a} \Big)^{\!\!-2} g_2^{_{(1)}}, \ \ \   
g_3^{_{(2)}} = \Big( \nfrac{b}{a} \Big)^{\!\!-3} g_3^{_{(1)}}.
\end{equation*}
However, $g_2, g_3$ are homogeneous forms of degree $2, 3$ 
with respect to the parameter ${(a : b)}$ of $\MS$. 
Thus, we cannot suitably patch up the Weierstrass coordinates 
${(\WC{X}_1 : \WC{Y}_1 : \WC{Z}_1)}$ 
and ${(\WC{X}_2 : \WC{Y}_2 : \WC{Z}_2)}$ 
because of the ramification of the square root 
appearing in the twisting transformation (\ref{twist}).
For global construction of the family, 
it would be necessary to change the parameter space $\MS$ 
for its covering space of even degree 
(in order to make the degrees of $g_2, g_3$ divisible by $4, 6$, respectively). 
In Section 4, we will use a octahedral covering space of degree $24$ 
in order to describe sections for our elliptic surfaces explicitly.
\par
Next, we mention another description 
of the rational elliptic surface $\ES$ 
as a pencil of plane cubic curves, 
which corresponds to the Weierstrass model (\ref{WModel}). 
%
\begin{Prop}
The rational elliptic surface $\ES$ 
of the Weierstrass model $(\ref{WModel})$ 
is obtained from the following pencil of plane cubic curves\/$:$
\begin{equation}\label{Cubics}
\PC{T}\,(\,a\,Y^2Z - 4X^3 + 27XZ^2 + 27Z^3) = \PC{S}\,(\,b\,Y^2Z - 4X^3),
\end{equation}
where ${(X:Y:Z)}$ is a homogeneous coordinate system of\/ $\PS[2]$, 
and ${(\PC{S} : \PC{T})}$ gives a parameter of the pencil.
\end{Prop}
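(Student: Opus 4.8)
The plan is to exhibit an explicit fibrewise projective coordinate change over the base $\PS[1]$ (with parameter $(\PC{S}:\PC{T})$) that carries each member of the pencil $(\ref{Cubics})$ to the corresponding fibre of the Weierstrass model $(\ref{WModel})$, thereby producing a birational map over $\PS[1]$ from the surface defined by the cubic pencil to the Weierstrass model; one then concludes by the uniqueness of relatively minimal elliptic surfaces with section over a fixed base.

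First I would rewrite $(\ref{Cubics})$ by collecting the monomials in $(X:Y:Z)$. Transposing and grouping, the defining equation of the pencil becomes
\begin{equation*}
(a\PC{T} - b\PC{S})\,Y^2Z = 4(\PC{T}-\PC{S})\,X^3 - 27\PC{T}\,XZ^2 - 27\PC{T}\,Z^3,
\end{equation*}
so each member is already a Weierstrass-type cubic whose coefficients are binary forms in $(\PC{S}:\PC{T})$. Writing $\lambda = \PC{T}-\PC{S}$ and $\mu = a\PC{T}-b\PC{S}$ for brevity, I would look for a change of variables $(\WC{X}:\WC{Y}:\WC{Z}) = (\alpha X : \beta Y : \gamma Z)$ bringing this cubic to the standard shape $\WC{Y}^2\WC{Z} = 4\WC{X}^3 - g_2\WC{X}\WC{Z}^2 - g_3\WC{Z}^3$. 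Matching the four monomials forces $\alpha/\gamma = \lambda\mu$ and $\beta/\gamma = \lambda\mu^2$ up to an overall scalar, and substituting back reproduces exactly $g_2 = 27\PC{T}\lambda\mu^2$ and $g_3 = 27\PC{T}\lambda^2\mu^3$, i.e.\ the coefficients of $(\ref{WModel})$. Concretely, $(\WC{X}:\WC{Y}:\WC{Z}) = (\lambda\mu\,X : \lambda\mu^2\,Y : Z)$ does the job, the verification being a routine substitution once the monomials are collected.

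This coordinate change is a fibrewise isomorphism over the open locus where $\lambda\mu \ne 0$ (which still includes the parameter values $\PC{S}=0$ and $\PC{T}=0$, over which the common fibre is the nodal, resp.\ cuspidal, cubic), and it degenerates only over $\PC{T}=\PC{S}$ and $a\PC{T}=b\PC{S}$. To globalise, I would observe that $(0:1:0)$ lies on every member of the pencil, since it annihilates each of the four monomials above, and that it is sent to the Weierstrass origin $\WC{X}=\WC{Z}=0$; this is the base point of the pencil whose final exceptional curve furnishes the $0$-section $s_0$, so the birational map respects the chosen sections. Both surfaces being relatively minimal rational elliptic surfaces with section over the same base $\PS[1]$, a birational map between them that restricts to an isomorphism on the generic fibre extends to a global isomorphism over the base, a relatively minimal elliptic surface being the unique smooth minimal model in its fibrewise birational class. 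This yields the asserted identification.

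The delicate point is the behaviour over the degeneration loci $\PC{T}=\PC{S}$ and $a\PC{T}=b\PC{S}$, where the coordinate change becomes indeterminate or drops rank, together with the fact that the phrase ``obtained from the pencil'' hides the resolution of the nine base points; I expect this, rather than the generic-fibre computation, to be the main obstacle. I would bypass a direct fibre-by-fibre analysis by invoking the uniqueness of relatively minimal models quoted above, which makes the generic-fibre isomorphism suffice. As a consistency check one may verify directly that the singular members match the types predicted by $\disc = g_2^{\,3}-27g_3^{\,2}$ for $(\ref{WModel})$: at $\PC{S}=0$ the member $aY^2Z-4X^3+27XZ^2+27Z^3=0$ is nodal $(\kt{I}_1)$, at $\PC{T}=0$ the member $bY^2Z-4X^3=0$ is cuspidal $(\kt{II})$, at $\PC{T}=\PC{S}$ the member factors as a line and a tangent conic $(\kt{III})$, and at $a\PC{T}=b\PC{S}$ it degenerates into three lines concurrent at $(0:1:0)$ whose resolution produces the $\widetilde{\RS{D}}_4$-configuration of type $\kt{I}_0^*$, in agreement with the confluences recorded in Table \ref{FiberType}.
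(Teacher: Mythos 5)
Your proposal is correct and takes essentially the same route as the paper: your change of variables ${(\WC{X}:\WC{Y}:\WC{Z}) = (\lambda\mu\,X : \lambda\mu^2\,Y : Z)}$ with ${\lambda = \PC{T}-\PC{S}}$, ${\mu = a\PC{T}-b\PC{S}}$ is exactly the paper's relation (\ref{Transform}), and cancelling the common factor $\lambda^2\mu^3$ recovers the cubic form (\ref{CubicsWform}) just as in the paper. Your additional appeals to the point ${(0:1:0)}$ as the $0$-section and to uniqueness of relatively minimal models, together with the fibre-type consistency check, merely make explicit what the paper leaves implicit in the phrase that the removed factor ``is not zero for almost all fibers.''
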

\begin{proof}
We construct a birational correspondence between the coordinates 
${(X:Y:Z)}$ of $\PS[2]$ and 
${(\WC{X} : \WC{Y} : \WC{Z})}$ of the Weierstrass model. 
We determine it by the following relations:
\begin{equation}\label{Transform}
\WC{X} = (\PC{T} - \PC{S})(a\,\PC{T} - b\,\PC{S})\,X, \ \ 
\WC{Y} = (\PC{T} - \PC{S})(a\,\PC{T} - b\,\PC{S})^2\,Y, \ \
\WC{Z} = Z.
\end{equation}
Then the Weierstrass model (\ref{WModel}) 
is birationally transformed to the following form:
\begin{equation}\label{CubicsWform}
(a\,\PC{T} - b\,\PC{S})\,Y^2Z = 4(\PC{T} - \PC{S})\, X^3 -27\,\PC{T}\, XZ^2 - 27\,\PC{T}\, Z^3,
\end{equation}
and it is the same to the cubic pencil (\ref{Cubics}). 
In this transformation, we have removed the common factor 
$(\PC{T} - \PC{S})(a\,\PC{T} - b\,\PC{S})^2$, 
which is not zero for almost all fibers.
\end{proof}
%
\par
In the general case of ${\frac{b}{\,a\,} \ne \infty, 0, 1}$, 
we have four singular cubic curves 
in the members of the cubic pencil (\ref{Cubics}), 
which appear at ${\frac{\,\PC{T}\,}{\,\PC{S}\,} = \infty, 0, 1, \frac{\,b\,}{\,a\,}}$ 
as follows:
\begin{equation}\label{Members}
\left\{
\begin{array}{@{\ }l@{\quad\ }l@{\,}}
a\,Y^2Z = (X-3Z)(2X+3Z)^2 
    & \text{(nodal cubic curve),} \\[1ex]
b\,Y^2Z = 4X^3  
    & \text{(cuspidal cubic curve),} \\[1ex]
\{ (b-a)Y^2 - 27XZ - 27 Z^2 \} Z = 0  
    & \text{(conic and tangent line),} \\[1ex]
4(b-a)X^3 - 27b\,XZ^2 - 27b\,Z^3 = 0 
    & \text{(three concurrent lines).}
\end{array}
\right.
\end{equation}
The cubic pencil has nine base points. 
Six of them are particularly 
on the above conic of the third singular cubic curve, 
lying on the last singular cubic curve of the three concurrent lines.
The remaining three base points are infinitely near 
to the inflection point ${(0:1:0)}$ of all smooth cubic curves 
in the pencil (having the common tangent ${Z=0}$ at the inflection point). 
This inflection point is the intersection point of the three concurrent lines.
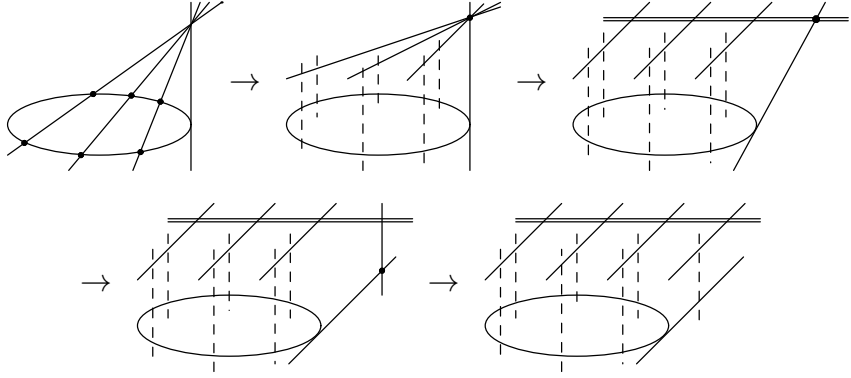
\begin{figure}[htb]
\centering\normalsize
\unitlength 0.1in
\begin{picture}( 12.4000, 10.4000)(  1.0000,-11.4000)
%
\special{pn 8}%
\special{ar 660 820 480 160  0.0000000 6.2831853}%
%
\special{pn 8}%
\special{pa 1140 180}%
\special{pa 1140 1060}%
\special{fp}%
%
\special{pn 13}%
\special{pa 1300 180}%
\special{pa 1300 180}%
\special{fp}%
%
\special{pn 8}%
\special{pa 1300 180}%
\special{pa 180 980}%
\special{fp}%
%
\special{pn 8}%
\special{pa 500 1060}%
\special{pa 1236 180}%
\special{fp}%
%
\special{pn 8}%
\special{pa 836 1060}%
\special{pa 1188 180}%
\special{fp}%
%
\special{pn 8}%
\special{pa 100 100}%
\special{pa 1340 100}%
\special{pa 1340 1140}%
\special{pa 100 1140}%
\special{pa 100 100}%
\special{ip}%
%
\special{pn 13}%
\special{sh 1}%
\special{ar 268 916 10 10 0  6.28318530717959E+0000}%
\special{sh 1}%
\special{ar 268 916 10 10 0  6.28318530717959E+0000}%
%
\special{pn 13}%
\special{sh 1}%
\special{ar 564 980 10 10 0  6.28318530717959E+0000}%
\special{sh 1}%
\special{ar 564 980 10 10 0  6.28318530717959E+0000}%
%
\special{pn 13}%
\special{sh 1}%
\special{ar 876 964 10 10 0  6.28318530717959E+0000}%
\special{sh 1}%
\special{ar 876 964 10 10 0  6.28318530717959E+0000}%
%
\special{pn 13}%
\special{sh 1}%
\special{ar 628 660 10 10 0  6.28318530717959E+0000}%
\special{sh 1}%
\special{ar 628 660 10 10 0  6.28318530717959E+0000}%
%
\special{pn 13}%
\special{sh 1}%
\special{ar 828 668 10 10 0  6.28318530717959E+0000}%
\special{sh 1}%
\special{ar 828 668 10 10 0  6.28318530717959E+0000}%
%
\special{pn 13}%
\special{sh 1}%
\special{ar 980 700 10 10 0  6.28318530717959E+0000}%
\special{sh 1}%
\special{ar 980 700 10 10 0  6.28318530717959E+0000}%
\end{picture}
\raisebox{7ex}{$\rightarrow$}
\unitlength 0.1in
\begin{picture}( 12.8000, 10.4000)(  1.0000,-11.4000)
%
\special{pn 8}%
\special{ar 660 820 480 160  0.0000000 6.2831853}%
%
\special{pn 8}%
\special{pa 580 1060}%
\special{pa 580 500}%
\special{da 0.050}%
\special{pa 260 500}%
\special{pa 260 980}%
\special{da 0.050}%
\special{pa 900 1020}%
\special{pa 900 460}%
\special{da 0.050}%
%
\special{pn 8}%
\special{pa 980 380}%
\special{pa 980 780}%
\special{da 0.050}%
\special{pa 660 460}%
\special{pa 660 740}%
\special{da 0.050}%
\special{pa 340 460}%
\special{pa 340 780}%
\special{da 0.050}%
%
\special{pn 8}%
\special{pa 100 100}%
\special{pa 1380 100}%
\special{pa 1380 1140}%
\special{pa 100 1140}%
\special{pa 100 100}%
\special{ip}%
%
\special{pn 8}%
\special{pa 1212 188}%
\special{pa 812 588}%
\special{fp}%
%
\special{pn 8}%
\special{pa 1140 180}%
\special{pa 1140 1060}%
\special{fp}%
%
\special{pn 8}%
\special{pa 500 580}%
\special{pa 1300 180}%
\special{fp}%
%
\special{pn 8}%
\special{pa 180 580}%
\special{pa 1300 204}%
\special{fp}%
%
\special{pn 13}%
\special{sh 1}%
\special{ar 1140 260 10 10 0  6.28318530717959E+0000}%
\special{sh 1}%
\special{ar 1140 260 10 10 0  6.28318530717959E+0000}%
\end{picture}
\raisebox{7ex}{$\rightarrow$}
\unitlength 0.1in
\begin{picture}( 16.0000, 10.4000)(  1.0000,-11.4000)
%
\special{pn 8}%
\special{pa 580 180}%
\special{pa 180 580}%
\special{fp}%
\special{pa 500 580}%
\special{pa 900 180}%
\special{fp}%
\special{pa 1220 180}%
\special{pa 820 580}%
\special{fp}%
%
\special{pn 8}%
\special{ar 660 820 480 160  0.0000000 6.2831853}%
%
\special{pn 8}%
\special{pa 580 1060}%
\special{pa 580 420}%
\special{da 0.050}%
\special{pa 260 420}%
\special{pa 260 980}%
\special{da 0.050}%
\special{pa 900 420}%
\special{pa 900 1020}%
\special{da 0.050}%
%
\special{pn 8}%
\special{pa 340 340}%
\special{pa 340 780}%
\special{da 0.050}%
\special{pa 660 340}%
\special{pa 660 740}%
\special{da 0.050}%
\special{pa 980 340}%
\special{pa 980 780}%
\special{da 0.050}%
%
\special{pn 8}%
\special{pa 340 260}%
\special{pa 1620 260}%
\special{fp}%
%
\special{pn 8}%
\special{pa 100 100}%
\special{pa 1700 100}%
\special{pa 1700 1140}%
\special{pa 100 1140}%
\special{pa 100 100}%
\special{ip}%
%
\special{pn 8}%
\special{pa 1500 180}%
\special{pa 1020 1060}%
\special{fp}%
%
\special{pn 20}%
\special{sh 1}%
\special{ar 1452 268 10 10 0  6.28318530717959E+0000}%
\special{sh 1}%
\special{ar 1452 268 10 10 0  6.28318530717959E+0000}%
%
\special{pn 8}%
\special{pa 340 276}%
\special{pa 1620 276}%
\special{fp}%
\end{picture}
\raisebox{7ex}{$\rightarrow$}
\unitlength 0.1in
\begin{picture}( 16.0000, 10.4000)(  1.0000,-11.4000)
%
\special{pn 8}%
\special{pa 580 180}%
\special{pa 180 580}%
\special{fp}%
\special{pa 500 580}%
\special{pa 900 180}%
\special{fp}%
\special{pa 1220 180}%
\special{pa 820 580}%
\special{fp}%
%
\special{pn 8}%
\special{ar 660 820 480 160  0.0000000 6.2831853}%
%
\special{pn 8}%
\special{pa 580 1060}%
\special{pa 580 420}%
\special{da 0.050}%
\special{pa 260 420}%
\special{pa 260 980}%
\special{da 0.050}%
\special{pa 900 420}%
\special{pa 900 1020}%
\special{da 0.050}%
%
\special{pn 8}%
\special{pa 340 340}%
\special{pa 340 780}%
\special{da 0.050}%
\special{pa 660 340}%
\special{pa 660 740}%
\special{da 0.050}%
\special{pa 980 340}%
\special{pa 980 780}%
\special{da 0.050}%
%
\special{pn 8}%
\special{pa 340 260}%
\special{pa 1620 260}%
\special{fp}%
%
\special{pn 8}%
\special{pa 1532 460}%
\special{pa 972 1020}%
\special{fp}%
%
\special{pn 8}%
\special{pa 100 100}%
\special{pa 1700 100}%
\special{pa 1700 1140}%
\special{pa 100 1140}%
\special{pa 100 100}%
\special{ip}%
%
\special{pn 8}%
\special{pa 1460 180}%
\special{pa 1460 660}%
\special{fp}%
%
\special{pn 13}%
\special{sh 1}%
\special{ar 1460 532 10 10 0  6.28318530717959E+0000}%
\special{sh 1}%
\special{ar 1460 532 10 10 0  6.28318530717959E+0000}%
%
\special{pn 8}%
\special{pa 1620 276}%
\special{pa 340 276}%
\special{fp}%
\end{picture}
\raisebox{7ex}{$\rightarrow$}
\unitlength 0.1in
\begin{picture}( 16.0000, 10.4000)(  1.0000,-11.4000)
%
\special{pn 8}%
\special{pa 580 180}%
\special{pa 180 580}%
\special{fp}%
\special{pa 500 580}%
\special{pa 900 180}%
\special{fp}%
\special{pa 1220 180}%
\special{pa 820 580}%
\special{fp}%
\special{pa 1140 580}%
\special{pa 1540 180}%
\special{fp}%
%
\special{pn 8}%
\special{ar 660 820 480 160  0.0000000 6.2831853}%
%
\special{pn 8}%
\special{pa 580 1060}%
\special{pa 580 420}%
\special{da 0.050}%
\special{pa 260 420}%
\special{pa 260 980}%
\special{da 0.050}%
\special{pa 900 420}%
\special{pa 900 1020}%
\special{da 0.050}%
%
\special{pn 8}%
\special{pa 340 340}%
\special{pa 340 780}%
\special{da 0.050}%
\special{pa 660 340}%
\special{pa 660 740}%
\special{da 0.050}%
\special{pa 980 340}%
\special{pa 980 780}%
\special{da 0.050}%
%
\special{pn 8}%
\special{pa 1620 276}%
\special{pa 340 276}%
\special{fp}%
\special{pa 340 260}%
\special{pa 1620 260}%
\special{fp}%
%
\special{pn 8}%
\special{pa 1532 460}%
\special{pa 972 1020}%
\special{fp}%
%
\special{pn 8}%
\special{pa 1300 340}%
\special{pa 1300 820}%
\special{da 0.050}%
%
\special{pn 8}%
\special{pa 100 100}%
\special{pa 1700 100}%
\special{pa 1700 1140}%
\special{pa 100 1140}%
\special{pa 100 100}%
\special{ip}%
\end{picture}
\vspace{-2ex}%
\caption{Process of resolution of cubic pencil}%
\label{Process}%
\end{figure}
\par
We can resolve the nine base points of the pencil 
by the process shown in Figure \ref{Process}. 
The first six base points are resolved by simply blowing them up, 
over which we have six $(-1)$-curves as exceptional curves. 
The remaining infinitely near base points 
are resolved by successively three-times blowing them up, 
over which we have two $(-2)$-curves and one $(-1)$-curve 
as the irreducible components of the exceptional set. 
Then this last $(-1)$-curve should be the $0$-section $s_0$ 
for our elliptic surface $\ES$ of the Weierstrass model (\ref{WModel}).
\par
By resolving the nine base points, 
the first three singular cubic curves of (\ref{Members}) become 
the singular fibers $\kt{I}_1, \kt{II}, \kt{III}$ of $\ES$ as themselves. 
The last singular cubic curve of the three concurrent lines 
becomes the fiber $\kt{I}_0^*$ of $\ES$ 
by separating the three lines and adding the two $(-2)$-curves 
arising over the infinitely near base points.
In this way, we see that 
the cubic pencil (\ref{Cubics}) determines certainly
our elliptic surface $\ES$ in the case 
$[\, \kt{I}_1, \kt{II}, \kt{III}, \kt{I}_0^*\,]$. 
\par
In the particular cases of ${\frac{b}{\,a\,} = \infty, 0, 1}$, 
two of the singular cubic curves (\ref{Members}) 
are confluent to a cubic curve that contains multiple lines. 
We have three singular cubic curves 
at ${\frac{\,\PC{T}\,}{\,\PC{S}\,} = \infty, 0, 1}$ in the members of the pencil. 
These are given by the following curves 
in the case of ${\frac{\,b\,}{a} = \infty}$: 
\begin{equation}
\left\{
\begin{array}{@{\ }l@{\quad\ }l@{\,}}
(X-3Z)(2X+3Z)^2 = 0
    & \text{(line and double line),} \\[1ex]
b\,Y^2Z = 4X^3  
    & \text{(cuspidal cubic curve),} \\[1ex]
( b\,Y^2 - 27XZ - 27 Z^2 )\,Z = 0  
    & \text{(conic and tangent line),} 
\end{array}
\right.
\end{equation}
and in the case of ${\frac{\,b\,}{a} = 0}$: 
\begin{equation}
\left\{
\begin{array}{@{\ }l@{\quad\ }l@{\,}}
a\,Y^2Z = (X-3Z)(2X+3Z)^2 
    & \text{(nodal cubic curve),} \\[1ex]
X^3 = 0  
    & \text{(triple line),} \\[1ex]
( a\,Y^2 + 27XZ + 27 Z^2 )\,Z = 0  
    & \text{(conic and tangent line),} 
\end{array}
\right.
\end{equation}
and in the case of ${\frac{\,b\,}{a} = 1}$:
\begin{equation}
\left\{
\begin{array}{@{\ }l@{\quad\ }l@{\,}}
a\,Y^2Z = (X-3Z)(2X+3Z)^2 
    & \text{(nodal cubic curve),} \\[1ex]
b\,Y^2Z = 4X^3  
    & \text{(cuspidal cubic curve),} \\[1ex]
(X+Z)\,Z^2 = 0  
    & \text{(line and double line).}
\end{array}
\right.
\end{equation}
By resolving the confluent base points, we can confirm that 
the confluent singular cubic curves 
are transformed to the singular fibers of types 
$\kt{I}_1^*, \kt{IV}^*, \kt{III}^*$. 
Then we obtain our elliptic surfaces in the cases 
$[\, \kt{I}_1^*, \kt{II}, \kt{III} \,]$, 
$[\, \kt{I}_1, \kt{IV}^*, \kt{III} \,]$, 
$[\, \kt{I}_1, \kt{II}, \kt{III}^* ]$ 
as the particular cases of the cubic pencil (\ref{Cubics}).
\par
We note that the pencil (\ref{Cubics}) in the general case 
is generated by the last two cubic curves of (\ref{Members}), 
in which the conic with the tangent line and the three concurrent lines 
are in a restricted position 
by the condition of degree-one $J$-function. 
In other words, 
a similar pencil generated by such cubic curves 
in more general position are not in ours, 
though the base points of it would be resolved 
by the same process as shown in Figure \ref{Process}. 
The obtained surface is a rational elliptic surface 
with $J$-function of higher degree $({>}\,1)$.
In the cubic form (\ref{CubicsWform}) of our pencil, 
the coefficients as linear forms with respect to ${(\PC{S}:\PC{T})}$
are in rather restricted conditions. 
The coefficient of $X^2 Z$ is made to be entirely zero. 
Furthermore, the coefficients of $XZ^2$ and $Z^3$ have a common factor,  
by which we have the $J$-function to be of degree one. 
\section{Sections of elliptic fibrations of degree one}\label{Sec:3}%
We will describe sections as divisor classes
for our rational elliptic surface $\ES$. 
We mainly discuss them in the general case 
$[\, \kt{I}_1, \kt{II}, \kt{III}, \kt{I}_0^*\,]$. 
\par
The fibers of types $\kt{I}_1, \kt{II}$ are 
irreducible singular rational curves with a node, a cusp, respectively. 
These divisor classes are the same 
to the fiber class $f$ of the fibration, 
and the $0$-section $s_0$ meets both of them. 
The fiber of type $\kt{III}$ consists 
of two tangential smooth rational curves, 
which we denote by $u_0, u_1$ such that $s_0$ meets $u_0$. 
The fiber of type $\kt{I}_0^*$ consists 
of five smooth rational curves 
$v_0, v_1, v_2, v_3$ (with multiplicity $1$) 
and $v_4$ (with multiplicity $2$), 
which form the intersection behavior 
of the extended diagram of type $\widetilde{\RS{D}}_4$, 
where $v_4$ is the central component of $\widetilde{\RS{D}}_4$ 
and $s_0$ meets $v_0$. 
The fiber class $f$ is written by the components 
of the fibers $\kt{III}$ and $\kt{I}_0^*$ as follows:
\begin{equation*}
f = u_0 + u_1 \ \ \ \text{and}\ \ \ f = v_0 + v_1 + v_2 + v_3 + 2v_4.
\end{equation*}
\par
The N\'eron-Severi group $\NS(\ES)$ is generated 
by the orthogonal classes $\dcl, e_1, \cdots{}, e_9$ 
associated with the resolution of the (ordered) nine base points 
for the pencil (\ref{Cubics}) of plane cubic curves. 
We now consider an order for the nine base points such that 
(three pairs of) 
$1,4$-th base points, $2,5$-th base points, $3,6$-th base points 
are, respectively, on one of the three lines 
of the last singular cubic curve of (\ref{Members}), 
and the $7,8,9$-th base points are 
the infinitely near base points of the pencil (\ref{Cubics}).
We see that, under this order, 
the components of the singular fibers 
$\kt{III}$ and $\kt{I}_0^*$ 
are written by the following:
\begin{equation*}
\begin{array}{@{}l@{}}
u_0 = \dcl - e_7 - e_8 - e_9, \\[.5ex]
u_1 = 2\dcl - e_1 - e_2 - e_3 - e_4 - e_5 - e_6, 
\end{array} \ \ \ \ \ 
\begin{array}{@{}l@{}}
v_0 = e_8 - e_9, \\[.5ex]
v_4 = e_7 - e_8, 
\end{array} \ \ \ \ \ 
\begin{array}{@{}l@{}}
v_1 = \dcl - e_1 - e_4 - e_7, \\[.5ex]
v_2 = \dcl - e_2 - e_5 - e_7, \\[.5ex]
v_3 = \dcl - e_3 - e_6 - e_7.
\end{array}
\end{equation*}
For the exceptional curves $e_1, \cdots{}, e_9$ 
appearing over the base points, 
the first six curves $e_1, \cdots{}, e_6$ and the last curve $e_9$ 
become mutually disjoint irreducible sections 
with the $0$-section $s_0 = e_9$. 
However, $e_7, e_8$ are not irreducible sections, 
which contain fiber components 
(as presented in the proof of the following proposition). 
We have that 
the component $u_1$ of the fiber $\kt{III}$ 
meets $e_1, \cdots, e_6$, 
and $v_1$ of $\kt{I}_0^*$ meets $e_1, e_4$, 
and $v_2$ of $\kt{I}_0^*$ meets $e_2, e_5$, 
and $v_3$ of $\kt{I}_0^*$ meets $e_3, e_6$, 
while $s_0 = e_9$ meets $u_0, v_0$ 
(as shown in Figure \ref{Configuration}). 
\begin{figure}[htb]
\centering\normalsize
\input{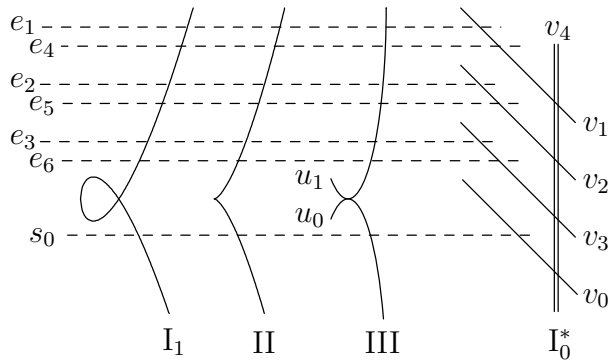}%
\vspace{1ex}%
\caption{Configuration of singular fibers and sections}%
\label{Configuration}%
\end{figure}
\par
The Mordell-Weil group $\MW(\ES)$ of irreducible sections 
is isomorphic to the factor group $\NS(\ES) / (U \oplus L)$ 
as mentioned in Section \ref{Sec:1}, 
where ${U = \langle \,f,s_0\, \rangle}$ and 
$L$ is the sublattice in $\NS(\ES)$ generated 
by the fiber components not meeting $s_0$.
In our case of $[\, \kt{I}_1, \kt{II}, \kt{III}, \kt{I}_0^*\,]$, 
the fiber components $u_1$ and $v_1, \cdots{}, v_4$ 
form a root basis of the sublattice $L$ 
of type ${\RS{A}_1 \oplus \RS{D}_4}$ with rank $5$, 
while $L$ becomes of type 
${\RS{A}_1 \oplus \RS{D}_5}$, ${\RS{A}_1 \oplus \RS{E}_6}$, $\RS{E}_7$ 
with rank $6, 7, 7$ in the particular cases 
$[\, \kt{I}_1^*, \kt{II}, \kt{III} \,]$, 
$[\, \kt{I}_1, \kt{IV}^*, \kt{III} \,]$, 
$[\, \kt{I}_1, \kt{II}, \kt{III}^* ]$, respectively. 
These types of sublattices are primitive 
in the root lattice of type $\RS{E}_8$, 
which gives that $\MW(\ES)$ is torsion-free, 
and we have the following:
%
%
\begin{Prop}
For the rational elliptic surface $\ES$ of the cubic pencil $(\ref{Cubics})$, 
the Mordell-Weil group $\MW(\ES)$ is a free $\Z$-module 
generated by irreducible sections of exceptional curves 
appearing over the base points of the pencil. 
The rank of\/ $\MW(\ES)$ is equal to $3$ 
in the general case $[\, \kt{I}_1, \kt{II}, \kt{III}, \kt{I}_0^*\,]$, 
while it is equal to $2,1,1$ in the particular cases 
$[\, \kt{I}_1^*, \kt{II}, \kt{III} \,]$, 
$[\, \kt{I}_1, \kt{IV}^*, \kt{III} \,]$, 
$[\, \kt{I}_1, \kt{II}, \kt{III}^* ]$, respectively.
\end{Prop}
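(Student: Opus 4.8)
The plan is to compute $\MW(\ES)$ through the isomorphisms $\MW(\ES) \cong U^\bot/L \cong \NS(\ES)/(U \oplus L)$ recalled in Section \ref{Sec:1}, using the identification of the sublattice $L$ of fiber components not meeting $s_0$ that was made just above the statement: in the general case $L$ has the root basis $u_1, v_1, v_2, v_3, v_4$ and is of type $\RS{A}_1 \oplus \RS{D}_4$, while in the three particular cases it is of type $\RS{A}_1 \oplus \RS{D}_5$, $\RS{A}_1 \oplus \RS{E}_6$, $\RS{E}_7$, of ranks $6,7,7$. Since $\rank \NS(\ES) = 10$, the Shioda-Tate formula (\ref{Shioda-Tate}) gives at once $\rank \MW(\ES) = 8 - \rank L$, equal to $3$ in the general case and to $2,1,1$ in the three particular cases. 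Thus only the freeness and the explicit generators remain to be established.

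For these I would argue directly inside $\NS(\ES)/(U \oplus L)$, using the divisor-class expressions for $f$, for $s_0 = e_9$, and for the basis of $L$ recorded above, and performing an elimination. Setting every generator of $U \oplus L$ to zero, one reads off $e_9 \equiv 0$ and $e_8 \equiv e_7$; subtracting $u_1 \equiv 0$ from $f \equiv 0$ gives $\dcl \equiv 2e_7$, whereas summing $v_1 \equiv v_2 \equiv v_3 \equiv 0$ and using $u_1 \equiv 0$ again gives $\dcl \equiv 3e_7$. Hence $e_7 \equiv 0$, so $\dcl \equiv e_7 \equiv e_8 \equiv e_9 \equiv 0$ and $e_4 \equiv -e_1$, $e_5 \equiv -e_2$, $e_6 \equiv -e_3$. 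This exhibits $\NS(\ES)/(U \oplus L)$ as the free $\Z$-module on the images of $e_1, e_2, e_3$, with no torsion; thus $\MW(\ES) \cong \Z^3$ is free and is generated by the irreducible sections arising from the exceptional curves $e_1, \ldots, e_6$, as claimed. Conceptually this is the same as the primitivity of $L$ in $U^\bot \cong \RS{E}_8$: since $U^\bot$ is free, the torsion of $U^\bot/L$ is the quotient of $L$ by its saturation, so primitivity is exactly torsion-freeness.

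The same scheme yields the result in the three particular cases, and I expect the preparatory bookkeeping there to be the main obstacle. Because $\kt{I}_0^*$ has merged with one of $\kt{I}_1, \kt{II}, \kt{III}$ to form $\kt{I}_1^*, \kt{IV}^*$, or $\kt{III}^*$, the configuration of Figure \ref{Configuration} must be redrawn: one re-determines which of $e_1, \ldots, e_9$ remain irreducible sections and re-expresses the components of the enlarged singular fiber in terms of $\dcl, e_1, \ldots, e_9$, so as to write down the correct root basis of the larger lattice $L$. With this in hand, either the analogous elimination or the primitivity of $L$ in $U^\bot$ gives a torsion-free quotient of the predicted rank $2,1,1$, generated by the surviving exceptional sections. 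For the primitivity step, the case $\RS{E}_7$ is transparent, since this $L$ is the orthogonal complement in $U^\bot$ of the $\RS{A}_1$ spanned by a root and orthogonal complements are saturated, while the cases $\RS{A}_1 \oplus \RS{D}_5$ and $\RS{A}_1 \oplus \RS{E}_6$ follow by comparing the discriminant group of $L$ with that of its orthogonal complement in $U^\bot$, completing the proof.
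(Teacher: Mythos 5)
Your proposal is correct and follows essentially the same route as the paper: the rank count is the same Shioda-Tate application, and your elimination in $\NS(\ES)/(U \oplus L)$ derives exactly the relations the paper writes out explicitly (namely $\dcl$, $e_7$, $e_8$ and $e_1+e_4$, $e_2+e_5$, $e_3+e_6$ expressed through $s_0$ and fiber components, so that $e_1, e_2, e_3$ give a basis), while your appeal to primitivity of $L$ in $\RS{E}_8$ for torsion-freeness matches the paper's remark preceding the proposition, and your deferral of the three particular cases mirrors the paper's own ``we can similarly determine'' treatment. The only blemish is a harmless wording slip: the torsion of $U^\bot/L$ is the saturation of $L$ modulo $L$, not ``the quotient of $L$ by its saturation.''
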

\begin{proof}
In the general case $[\, \kt{I}_1, \kt{II}, \kt{III}, \kt{I}_0^*\,]$, 
the fiber components not meeting $s_0$ generate 
the sublattice $L$ of type ${\RS{A}_1 \oplus \RS{D}_4}$ with rank $5$. 
Thus, the rank of $\MW(\ES)$ is equal to $3$ 
by the Shioda-Tate formula (\ref{Shioda-Tate}).
The divisor classes $\dcl, e_7, e_8$ and ${e_1+e_4}$, ${e_2+e_5}$, ${e_3+e_6}$ 
are represented by the $0$-section $s_0$ and fiber components as follows:
\begin{equation*}
\begin{array}{@{}c@{\,\,}c@{\,\,}l@{}}
\dcl &=& 3s_0 + u_0 + 2v_0 + v_4, \\[.5ex]
e_7 &=& s_0 + v_0 + v_4, \\[.5ex]
e_8 &=& s_0 + v_0, 
\end{array} \ \ \ \ \ \ \ 
\begin{array}{@{}c@{\,\,}c@{\,\,}l@{}}
e_1 + e_4 &=& 2s_0 + u_0 + v_0 - v_1, \\[.5ex]
e_2 + e_5 &=& 2s_0 + u_0 + v_0 - v_2, \\[.5ex]
e_3 + e_6 &=& 2s_0 + u_0 + v_0 - v_3.
\end{array}
\end{equation*}
These are considered to be zero as residue class 
in $\NS(\ES) / (U \oplus L)$.
Therefore, the irreducible sections of the exceptional curves $e_1, e_2, e_3$ 
form a basis of $\MW(\ES)$. 
We can similarly determine the rank of $\MW(\ES)$ with its generators 
in the remaining particular cases 
$[\, \kt{I}_1^*, \kt{II}, \kt{III} \,]$, 
$[\, \kt{I}_1, \kt{IV}^*, \kt{III} \,]$, 
$[\, \kt{I}_1, \kt{II}, \kt{III}^* ]$. 
\end{proof}
%
%
\par
According to Oguiso-Shioda's list of \cite{OS}, 
the Mordell-Weil groups of our elliptic surfaces 
have the following lattice structures 
(corresponding to No.$18$, $30$, $49$, $43$).
\begin{equation*}
\begin{array}{@{\ \ }c@{\ \ } *{3}{| @{\ \ }c@{\ \ }} }
[\, \kt{I}_1, \kt{II}, \kt{III}, \kt{I}_0^*\,]
  & [\, \kt{I}_1^*, \kt{II}, \kt{III} \,]
    & [\, \kt{I}_1, \kt{IV}^*, \kt{III} \,]
      & [\, \kt{I}_1, \kt{II}, \kt{III}^* ] \\
\hline
A_1^* \oplus A_1^* \oplus A_1^* 
 & A_1^* \oplus \langle \frac{\,1\,}{4} \rangle 
 & \langle \frac{\,1\,}{6} \rangle 
 & A_1^* \\
\end{array}
\end{equation*}
In the discussion below, 
we see the lattice structure of the general case 
$[\, \kt{I}_1, \kt{II}, \kt{III}, \kt{I}_0^*\,]$ 
rather directly by representing sections 
as divisor classes in $\NS(\ES)$. 
\par
To avoid confusion, 
we denote the addition of $\MW(\ES)$ by $\dot{+}\,$ 
and denote the irreducible sections of the exceptional curves 
$e_1, e_2, e_3$ by $s(e_1), s(e_2), s(e_3)$. 
Any irreducible section ${s \in \MW(\ES)}$ is represented 
by a linear combination of the base sections $s(e_1), s(e_2), s(e_3)$ 
with integer coefficients:
\begin{equation*}
{}\ \ \ \ \ \ 
s = n_1 s(e_1) \,\dot{+}\, n_2 s(e_2) \,\dot{+}\, n_3 s(e_3)
\ \ \ \ \ \ (\, n_1, n_2, n_3 \in \Z \,).
\end{equation*}
The section $s$ determines a divisor class in $\NS(\ES)$, 
and it should be represented 
by a linear combination of the base classes 
$\dcl, e_1, \cdots{}, e_9$ of $\NS(\ES)$.
We present this in the following proposition:
%
%
\begin{Prop}
For the rational elliptic surface $\ES$ in the case 
$[\, \kt{I}_1, \kt{II}, \kt{III}, \kt{I}_0^*\,]$ 
with base sections $s(e_1), s(e_2), s(e_3)$ of\/ $\MW(\ES)$ 
$($determined in association with 
base points of the cubic pencil $(\ref{Cubics})\,)$, 
the irreducible section ${s \in \MW(\ES)}$ written 
by the linear combination of $s(e_1), s(e_2), s(e_3)$ 
with coefficients $n_1, n_2, n_3 \,({\in}\,\Z)$ 
is represented by the base classes 
$\dcl, e_1, \cdots{}, e_9$ of\/ $\NS(\ES)$ 
as the following divisor class\/$:$
\begin{equation*}
\begin{array}{@{}c@{\,\,}c@{\,}l@{}}
s &=& \big( 3m + [\frac{\,\varepsilon\,}{2}](\varepsilon-1) \big) \dcl \\[.75ex]
  & & {} - \big( m - \frac{n_1 + \varepsilon_1}{2} 
                   + [\frac{\,\varepsilon\,}{2}] \varepsilon_1 \big) e_1
         - \big( m - \frac{n_2 + \varepsilon_2}{2} 
                   + [\frac{\,\varepsilon\,}{2}] \varepsilon_2 \big) e_2
         - \big( m - \frac{n_3 + \varepsilon_3}{2} 
                   + [\frac{\,\varepsilon\,}{2}] \varepsilon_3 \big) e_3 \\[.75ex]
  & & {} - \big( m + \frac{n_1 - \varepsilon_1}{2} 
                   + [\frac{\,\varepsilon\,}{2}] \varepsilon_1 \big) e_4
         - \big( m + \frac{n_2 - \varepsilon_2}{2} 
                   + [\frac{\,\varepsilon\,}{2}] \varepsilon_2 \big) e_5
         - \big( m + \frac{n_3 - \varepsilon_3}{2} 
                   + [\frac{\,\varepsilon\,}{2}] \varepsilon_3 \big) e_6 \\[.75ex]
  & & {} - \big( m + [\frac{\,\varepsilon\,}{2}] (\varepsilon -2) \big) e_7
         - \big( m + [\frac{\,\varepsilon\,}{2}] (\varepsilon -2) \big) e_8
         - \big( m + (1 - [\frac{\,\varepsilon\,}{2}] )(\varepsilon -1) \big) e_9, 
\end{array}
\end{equation*}
where $\varepsilon_i$ is the remainder of the division of\/ $n_i$ by $2$, 
and $\varepsilon = \varepsilon_1 + \varepsilon_2 + \varepsilon_3$, 
and $[\frac{\,\varepsilon\,}{2}]$ 
is the quotient of the division of\/ $\varepsilon$ by $2$, 
and $m = \frac{\,1\,}{4} 
         (n_1^{\,2} + n_2^{\,2} + n_3^{\,2} - \varepsilon ) \,(\in \Z)$.
\end{Prop}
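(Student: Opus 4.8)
The plan is to push everything through the correspondence $\Nu(\ES)\leftrightarrow U^\bot$ of Section~\ref{Sec:1}, under which the Mordell-Weil addition $\dot{+}$ is ordinary addition in $U^\bot$ read modulo the root sublattice $L$ of type $\RS{A}_1\oplus\RS{D}_4$. First I would attach roots to the three base sections. Since $s_0=e_9$ and $s(e_i)=e_i$ as divisor classes, with $(e_i-e_9)\cdot(e_i-e_9)=-2$, the defining relation $r=s-s_0+\frac{1}{2}\big((s-s_0)\cdot(s-s_0)\big)f$ gives $\rho_i:=e_i-e_9-f\in U^\bot$, and a direct check yields $\rho_i\cdot\rho_j=-2$ for $i=j$ and $-1$ for $i\neq j$. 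Because $\dot{+}$ corresponds to $+$ in $U^\bot/L$, the element $s=n_1 s(e_1)\,\dot{+}\,n_2 s(e_2)\,\dot{+}\,n_3 s(e_3)$ is represented by the root $r=n_1\rho_1+n_2\rho_2+n_3\rho_3$.

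Next I would make $r$ and its numerical section explicit. Writing $N=n_1+n_2+n_3$ and expanding $\rho_i=-3\dcl+\sum_{k=1}^{9}e_k+e_i-e_9$, one finds $r=-3N\dcl+\sum_{i=1}^{3}(N+n_i)e_i+N(e_4+\cdots+e_8)$ with vanishing $e_9$-coefficient, and then $r\cdot r=-(N^2+n_1^{\,2}+n_2^{\,2}+n_3^{\,2})$. Substituting into $\tilde{s}=s_0-\frac{1}{2}(r\cdot r)f+r$ yields a numerical section whose coefficients are polynomials in $N$ and $\sum n_i^{\,2}$. The point I would stress here is that $\tilde{s}$ is in general \emph{not} the irreducible section: it is merely the $\Nu(\ES)$-sum, and it differs from the class we want by a combination of fiber components (an element of $L$ together with a multiple of the fiber class $f$), exactly the discrepancy that the Mordell-Weil reduction $U^\bot\to U^\bot/L$ discards.

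The crux is to single out the irreducible representative inside this coset. The irreducible section $s$ is characterized by meeting exactly one simple component of each reducible fiber: $s\cdot v_4=0$, precisely one of $s\cdot v_0,\dots,s\cdot v_3$ equals $1$, and precisely one of $s\cdot u_0,s\cdot u_1$ equals $1$. Writing $s\cdot C=s_0\cdot C+r'\cdot C$ for the corrected root $r'=r+\lambda$ ($\lambda\in L$), these turn into intersection conditions against $u_1,v_1,v_2,v_3,v_4$ that determine $\lambda$ uniquely. The parities govern everything: the image of $r$ in the discriminant group $L^{*}/L\cong(\Z/2)^3$ depends only on $\varepsilon_i=n_i\bmod 2$, fixing which component of the $\kt{III}$-fiber is met (the $\RS{A}_1$-part, through $\varepsilon=\varepsilon_1+\varepsilon_2+\varepsilon_3$) and which component of the $\kt{I}_0^*$-fiber is met (the $\RS{D}_4$-part). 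The $\RS{D}_4$-bookkeeping --- whether $0,1,2$ or $3$ of the $\varepsilon_i$ are odd, and how the central component $v_4$ carries the surplus --- is precisely what manufactures the quotient $[\frac{\,\varepsilon\,}{2}]$ and the linear term in $\varepsilon$ appearing in the stated coefficients. I expect this $\RS{D}_4$ case analysis to be the main obstacle, because the symmetry of $\widetilde{\RS{D}}_4$ makes the component actually met depend on the full parity pattern, not on $\varepsilon$ alone.

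Finally, with $\lambda$ in hand I would form $s=\tilde{s}-\lambda$, collect coefficients of $\dcl,e_1,\dots,e_9$, and verify agreement with the asserted expression; the self-intersection shift is what converts the naive $\frac{1}{2}(N^2+\sum n_i^{\,2})$ into the integer $m=\frac{1}{4}(\sum n_i^{\,2}-\varepsilon)$. Integrality of $m$ I would record separately: each $n_i^{\,2}-\varepsilon_i$ is divisible by $4$ (indeed by $8$ when $n_i$ is odd), so $\sum_i(n_i^{\,2}-\varepsilon_i)\equiv 0\pmod 4$. As consistency checks I would confirm that $(n_1,n_2,n_3)=(1,0,0),(0,1,0),(0,0,1)$ return $s=e_1,e_2,e_3$, and test a few mixed-parity triples against the Mordell-Weil law.
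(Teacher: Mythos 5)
Your proposal is correct in substance but runs in the opposite direction from the paper's proof. You \emph{derive} the class: starting from the roots $\rho_i = e_i - e_9 - f$ (your Gram matrix, the expansion of $r = n_1\rho_1 + n_2\rho_2 + n_3\rho_3$, and the norm $r\cdot r = -(N^2 + n_1^{\,2}+n_2^{\,2}+n_3^{\,2})$ are all correct), you then locate the irreducible representative of the coset $r+L$ by imposing the intersection pattern with $u_0, u_1, v_0, \dots, v_4$ and reading the parity data off the discriminant group $L^*/L \cong (\Z/2)^3$. The paper instead \emph{verifies} the stated formula: it rewrites $s$ as $e_9 + \sum n_i(e_i - e_9)$ plus an explicit combination of fiber components and $mf$ (so the Mordell-Weil class is right), checks $(s\cdot f)=1$ and $(s\cdot s)=-1$, and then computes $s\cdot u_0,\, s\cdot u_1,\, s\cdot v_0, \dots, s\cdot v_4$ directly, using identities such as $[\frac{\,\varepsilon\,}{2}]^2 = [\frac{\,\varepsilon\,}{2}]$. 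Your route buys transparency: the flip you anticipate in the $\RS{D}_4$ part is exactly the discriminant arithmetic, since the images of $\rho_1, \rho_2, \rho_3$ are the three nonzero classes of $(\Z/2)^2$, which sum to zero; so, e.g., $(\varepsilon_1,\varepsilon_2,\varepsilon_3)=(1,1,0)$ forces the section to meet $v_3$ and $(1,1,1)$ forces $v_0$, matching the paper's $s\cdot v_i = \varepsilon_i + [\frac{\,\varepsilon\,}{2}](1-2\varepsilon_i)$ --- this is precisely where $[\frac{\,\varepsilon\,}{2}]$ and $m$ come from, which the paper's guess-and-check presentation leaves unexplained. The cost is that your four-case analysis (by $\varepsilon = 0,1,2,3$) must actually be carried out, whereas the paper compresses it into closed-form coefficients and a handful of intersection computations. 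Two small repairs to your write-up: the corrected section should be assembled as $s = s_0 - \frac{1}{2}(r'\cdot r')f + r'$ with $r' = r + \lambda$, not literally as $\tilde{s} - \lambda$, since subtracting $\lambda$ alone spoils $(s\cdot s) = -1$ and the $f$-coefficient must be recomputed from $r'$ (your closing remark about the self-intersection shift shows you know this, but as written $\tilde{s}-\lambda$ is not a numerical section); and you should state explicitly that each class of $\Nu(\ES)$ modulo fiber components contains a unique irreducible section --- this uniqueness, from the isomorphism $\MW(\ES) \cong \Nu(\ES)/(\text{fiber components})$ of Section \ref{Sec:1}, is what lets your intersection conditions pin down $\lambda$, just as it is what makes the paper's three checks suffice.
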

\begin{proof}
We will show that the divisor class $s$ 
as represented above by $\dcl, e_1, \cdots{}, e_9$ 
coincides with the linear combination of $s(e_1), s(e_2), s(e_3)$. 
The representation of $s$ is the same to the following one:
\begin{equation*}
\begin{array}{@{}c@{\,\,}c@{\,}l@{}}
s &=& \,e_9 + n_1 (e_1-e_9) + n_2 (e_2-e_9) + n_3 (e_3-e_9) \\[.75ex]
  & & {} - \frac{n_1 - \varepsilon_1}{2} (u_0+v_0-v_1)  
         - \frac{n_2 - \varepsilon_2}{2} (u_0+v_0-v_2) 
         - \frac{n_3 - \varepsilon_3}{2} (u_0+v_0-v_3) \\[.75ex]
  & & {} - [\frac{\,\varepsilon\,}{2}] 
           \big( u_0 + (\varepsilon - 2)v_0 - \varepsilon_1 v_1 
                 - \varepsilon_2 v_2 - \varepsilon_3 v_3 - v_4 \big) + mf.
\end{array}
\end{equation*}
Hence, our assertion follows at least up to fiber components.
It is not hard to check that 
$s$ satisfies $(s \cdot f) = 1$ and $(s \cdot s) = -1$ 
as the condition of numerical sections. 
In fact, the fiber components are orthogonal to the fiber class $f$, 
and $[\frac{\,\varepsilon\,}{2}]^2 = [\frac{\,\varepsilon\,}{2}]$
for $\varepsilon = 0,1,2,3$, 
and then we can check the following:
\begin{equation*}\textstyle
(s \cdot f) = (e_9 \cdot f) = 1 
\ \ \ \text{and}\ \ \ 
(s \cdot s) = [\frac{\,\varepsilon\,}{2}] (4 \varepsilon -6)
              - \varepsilon (\varepsilon -1) -1 
            = -1.
\end{equation*}
To show that $s$ is irreducible, 
it is sufficient to check that, 
on each (reducible) singular fiber, 
$s$ intersects only one fiber component of multiplicity $1$. 
For the components of the fiber $\kt{III}$, 
we have the following intersection numbers:
\begin{equation*}\textstyle
(s \cdot u_0) = 1 - \varepsilon + 2 [\frac{\,\varepsilon\,}{2}] 
\ \ \ \text{and}\ \ \ 
(s \cdot u_1) = \varepsilon - 2 [\frac{\,\varepsilon\,}{2}],
\end{equation*}
which mean that $s$ intersects either $u_0$ or $u_1$. 
For the components of the fiber $\kt{I}_0^*$, 
we have $(s \cdot v_4) = 0$ 
with the component $v_4$ of multiplicity $2$, 
and we have the following intersection numbers 
with the other components of multiplicity $1$:
\begin{equation*}\textstyle
(s \cdot v_0) = 1 - \varepsilon + [\frac{\,\varepsilon\,}{2}] (2 \varepsilon -3) 
\ \ \ \text{and}\ \ \ 
\begin{array}{@{}l@{}}
(s \cdot v_1) = \varepsilon_1 + [\frac{\,\varepsilon\,}{2}] (1 - 2\varepsilon_1), \\[.75ex]
(s \cdot v_2) = \varepsilon_2 + [\frac{\,\varepsilon\,}{2}] (1 - 2\varepsilon_2), \\[.75ex]
(s \cdot v_3) = \varepsilon_3 + [\frac{\,\varepsilon\,}{2}] (1 - 2\varepsilon_3),
\end{array}
\end{equation*}
which mean that $s$ intersects 
only one fiber component of multiplicity $1$ of the fiber $\kt{I}_0^*$. 
Therefore, $s$ is the desired irreducible section.
\end{proof}
%
%
\par
The above representation of the section $s$ 
by $\dcl, e_1, \cdots{}, e_9$ means that, 
by the process of the resolution of the cubic pencil (\ref{Cubics}), 
the section $s$ is obtained 
by the proper transform of an algebraic curve on $\PS[2]$ 
whose degree is given by the coefficient of $\dcl$, 
and this curve on $\PS[2]$ passes through the $i$-th base point 
with the multiplicity given 
by the minus of the coefficient of $e_i$. 
For instance, the section $s(e_1) \,\dot{+}\, s(e_2)$ 
is represented as follows:
\begin{equation*}
{}\hspace{-4.75ex}
\begin{array}{@{}r@{\,\,}c@{\,\,}l@{}}
s(e_1) \,\dot{+}\, s(e_2) 
 &=& \dcl - e_4\! - e_5 \\[.5ex]
 &{}\!\!\!\big(\,{=}& e_9\! + (e_1\! -e_9) + (e_2\! -e_9) 
                      + f - u_0\! - v_0\! - v_3\! - v_4 \,\big),
\end{array}
\end{equation*}
which means that this section is the proper transform of the line 
passing through once the $4,5$-th base points. 
\par
We now transform the representation of the section $s$ 
by the following relations:
\begin{equation*}\textstyle
\begin{array}{@{}l@{}}
e_1 - e_9 = \frac{\,1\,}{2} (e_1 - e_4) 
            + \frac{\,1\,}{2} (e_1 + e_4 - 2e_9) 
          = \frac{\,1\,}{2}\,\rt_{14} 
            + \frac{\,1\,}{2} (u_0 + v_0 - v_1), \\[.75ex]
e_2 - e_9 = \frac{\,1\,}{2} (e_2 - e_5) 
            + \frac{\,1\,}{2} (e_2 + e_5 - 2e_9) 
          = \frac{\,1\,}{2}\,\rt_{25} 
            + \frac{\,1\,}{2} (u_0 + v_0 - v_2), \\[.75ex]
e_3 - e_9 = \frac{\,1\,}{2} (e_3 - e_6) 
            + \frac{\,1\,}{2} (e_3 + e_6 - 2e_9) 
          = \frac{\,1\,}{2}\,\rt_{36} 
            + \frac{\,1\,}{2} (u_0 + v_0 - v_3).
\end{array}
\end{equation*}
Thus, we obtain the following representation 
(with rational coefficients):
\begin{equation*}
\begin{array}{@{}c@{\,\,}c@{\,}l@{}}
s &=& \,s_0 + \frac{\,n_1}{2}\,\rt_{14} + \frac{\,n_2}{2}\,\rt_{25} + \frac{\,n_3}{2}\,\rt_{36} \\[.75ex]
  & & {} + \frac{\,\varepsilon_1}{2} (u_0+v_0-v_1)  
         + \frac{\,\varepsilon_2}{2} (u_0+v_0-v_2) 
         + \frac{\,\varepsilon_3}{2} (u_0+v_0-v_3) \\[.75ex]
  & & {} - [\frac{\,\varepsilon\,}{2}] 
           \big( u_0 + (\varepsilon - 2)v_0 - \varepsilon_1 v_1 
                 - \varepsilon_2 v_2 - \varepsilon_3 v_3 - v_4 \big) + mf.
\end{array}
\end{equation*}
Three roots $\rt_{14}, \rt_{25}, \rt_{36}$ 
are orthogonal to each other, 
which form the root system of type $A_1 \oplus A_1 \oplus A_1$. 
These roots are also orthogonal 
to the fiber components and the $0$-section, 
and thus they are contained in $(U \oplus L)^{\bot}$. 
As mentioned in Section \ref{Sec:1}, 
the lattice structure of $\MW(\ES)$ 
is given by the dual lattice of $(U \oplus L)^{\bot}$. 
By the above representation of the section $s$, 
this dual lattice is identified 
with the lattice generated by 
$\frac{\,1\,}{2} \rt_{14}, 
 \frac{\,1\,}{2} \rt_{25}, 
 \frac{\,1\,}{2} \rt_{36}$.  
Thus, in the general case 
$[\, \kt{I}_1, \kt{II}, \kt{III}, \kt{I}_0^*\,]$, 
the lattice structure of $\MW(\ES)$ 
is certainly of type $A_1^* \oplus A_1^* \oplus A_1^* $. 
\section{Coordinate representation of sections}\label{Sec:4}%
In this section, 
we will represent generators of the sections 
for our rational elliptic surface (\ref{WModel}) 
in terms of the coordinates 
${(\WC{X} : \WC{Y} : \WC{Z})}$ of the Weierstrass model.
By the birational correspondence of (\ref{Transform}), 
it is equivalent to describe 
the nine base points of the cubic pencil (\ref{Cubics}) 
in terms of the coordinates ${(X:Y:Z)}$ of $\PS[2]$.
\par
To determine the base points, 
we will solve (\ref{Cubics}) (equivalently (\ref{Members})) 
as simultaneous cubic equations. 
In the general case $[\, \kt{I}_1, \kt{II}, \kt{III}, \kt{I}_0^*\,]$, 
as mentioned in Section \ref{Sec:2}, 
three base points are infinitely near 
to the inflection point ${(0:1:0)}$ associated with the $0$-section. 
The other six base points 
are particularly on the conic $({\cong}\ \PS[1])$ 
in the third singular cubic curve of (\ref{Members}), 
and they are divided into three pairs 
lying, respectively, on one of the three lines 
in the last singular cubic curve of (\ref{Members}). 
This situation may remind us of some association with a octahedron, 
that is, the three pairs for the base points 
may be associated with the three antipodal pairs 
for the vertices of the octahedron. 
Hence, we use a octahedral covering space in the discussion below, 
for which the octahedral group of the covering transformations 
would be interpreted as some permutations of the base points.
\par
We will construct a homogeneous mapping 
from $\C^2$ with coordinates ${(\alpha, \beta)}$ 
onto $\C^2$ with ${(a, b)}$ 
such that it induces a (branched) covering mapping 
from $\PS[1]$ with homogeneous coordinates ${(\alpha : \beta)}$ 
onto $\PS[1]$ with ${(a:b)}$ concerning action of the octahedral group. 
We consider here the regular octahedron to be embedded 
into the Riemann sphere ${\PS[1] = \C \cup \{ \infty \}}$ 
with the six vertices
as ${\frac{\alpha}{\,\beta\,} = \infty, 0, \pm \,1, \pm \iu}$ 
${(\iu \!=\! \sqrt{-1}\,)}$. 
Then the action of the octahedral group 
on $\PS[1]$ with ${(\alpha : \beta)}$ is lifted 
as the unitary action of the binary octahedral group 
on $\C^2$ with ${(\alpha, \beta)}$. 
\par
We recall several invariant polynomials 
with respect to the group action 
(see \cite{Mukai}, \cite{Spr} for details of invariant theory).
Let $V, E, F$ be the following homogeneous polynomials:
\begin{equation*}
\begin{array}{@{}l@{}}
V = 4\,\alpha \beta (\alpha^4 - \beta^4),  \\[1ex]
E = 8 (\alpha^4 + \beta^4)(\alpha^8 - 34\,\alpha^4 \beta^4 + \beta^8),  \\[1ex]
F = 4 (\alpha^8 + 14\,\alpha^4 \beta^4 + \beta^8), 
\end{array}
\end{equation*}
(where the coefficients $4, 8, 4$ of $V, E, F$ are attached for later discussion). 
In the Riemann sphere, 
the zeros of $V, E, F$ are given as follows:
\begin{equation*}
\begin{array}{@{}ccl@{}}
V=0 & \leftrightarrow & \textstyle
      \frac{\alpha}{\,\beta\,} = 
      \infty,\, 0,\, \pm\,1,\, \pm \iu, \\[1ex]
E=0 & \leftrightarrow & \textstyle
      \frac{\alpha}{\,\beta\,} = 
      {\pm} \frac{1}{\sqrt{2}}(1 \pm \iu),\, 
      {\pm} (\sqrt{2} \pm 1),\, 
      {\pm} (\sqrt{2} \pm 1) \iu,  \\[1ex]
F=0 & \leftrightarrow & \textstyle
      \frac{\alpha}{\,\beta\,} = 
      {\pm} \frac{1 + \iu}{\sqrt{3}-1},\, 
      {\pm} \frac{1 - \iu}{\sqrt{3}-1},\,
      {\pm} \frac{1 + \iu}{\sqrt{3}+1},\,
      {\pm} \frac{1 - \iu}{\sqrt{3}+1}.
\end{array}
\end{equation*}
They correspond to the vertices, 
the middle points of the edges, the central points of the faces 
in the regular octahedron, respectively.
Under the action of the binary octahedral group, 
$F$ is invariant, but $V$ and $E$ may change their signs themselves. 
These are invariant under the action of the normal subgroup of index $2$
as the binary tetrahedral group.
\par
The invariant ring of the polynomials 
under the tetrahedral action is generated by $V, F, E$, 
while the invariant ring 
under the octahedral action is generated by $F, V^2, V\! E$. 
These generators have the following relations:
\begin{equation*}
E^2 = F^3 - 27 V^4 
\ \ \ \text{and}\ \ \ 
(V\! E)^2 = (F)^3 (V^2) - 27 (V^2)^3,
\end{equation*}
(which are associated with the rational double singularities 
of types $\RS{E}_6, \RS{E}_7$). 
In the octahedral invariant ring, 
the homogeneous part of degree $24$ 
is the vector space of dimension $2$ 
generated by (two of) the invariants $E^2, F^3, V^4$ 
with the above (linear) relation among them. 
By setting ${(a,b) = (E^2, F^3)}$, 
we obtain a homogeneous mapping 
from $\C^2$ with ${(\alpha, \beta)}$ onto $\C^2$ with ${(a,b)}$, 
and this is the mapping mentioned above for our purpose. 
\par
By the homogeneous mapping, we have the $24$-to-$1$ covering mapping 
from $\PS[1]$ with ${(\alpha : \beta)}$ onto $\PS[1]$ with ${(a : b)}$ 
concerning the octahedral group action. 
This covering is  branched at the points 
${\frac{\,b\,}{a} = \infty, 0, 1}$, 
over which we have the points as the zeros of $E, F, V$ 
associated with the $12$ edges, the $8$ faces, the $6$ vertices 
of the octahedron, respectively.
As an intermediate covering space, 
we have $\PS[1]$ with ${(E : V^2)}$ determined 
by the basis $E, V^2$ of the homogeneous part of degree $12$ 
in the tetrahedral invariant ring. 
This covering space is the quotient space 
of $\PS[1]$ with ${(\alpha : \beta)}$ 
by the tetrahedral group action 
while it gives the double covering space 
over $\PS[1]$ with ${(a:b) = (E^2 : F^3)}$ 
branched at ${\frac{\,b\,}{a} = \infty, 1}$ by the relation below:
\begin{equation*}
{}\hspace{4ex}
\Big( \nfrac{V^2\!}{E} \Big)^{\!2} 
 = \nfrac{1}{27} \Big( \nfrac{F^3}{E^2} - 1 \Big)
\ \ \ \ \ 
\Big( \,{\Leftrightarrow} \ E^2 = F^3 - 27 V^4 \,\Big).
\end{equation*}
The octahedral group has the normal subgroup of index $6$ 
as the Klein four-group being the dihedral group for a regular $2$-gon 
(with the quaternion group as its binary version). 
The quotient space by this group action 
gives another intermediate covering space, 
and we have then the following sequence of covering spaces:
\begin{equation*}
\begin{array}{@{}c@{}c@{}c@{}c@{}c@{}c@{}c@{}}
\PS[1] & \xrightarrow{\ 4 \,:\,1 \ } & \PS[1] 
       & \xrightarrow{\ 3 \,:\,1 \ } & \PS[1] 
       & \xrightarrow{\ 2 \,:\,1 \ } & \PS[1] \phantom{= (a:b)} \\[1ex]
(\alpha : \beta)  & & (V_1^{\,2} : V_2^{\,2}) 
                  & & (E : V^2)  & & (E^2 : F^3) = (a:b),
\end{array}
\end{equation*}
where $V_1, V_2$ are certain polynomials of degree $2$, 
which are defined with $V_3$ later 
in the discussion to describe the sections of our elliptic surfaces. 
The invariant ring for the action of the quaternion group 
is generated by $V_1^{\,2}, V_2^{\,2}$ and $V$. 
This invariant ring has the homogeneous part of degree $4$ 
as the vector space of dimension $2$ 
generated by $V_1^{\,2}, V_2^{\,2}$.
For a detailed description, see Appendix of this paper.
\par
For the affine parameter ${(\alpha, \beta)}$ 
over ${(a, b) \ne (0,0)}$, 
we have the Weierstrass model (\ref{WModel}) with ${(a,b) = (E^2, F^3)}$, 
and it is now written by the following form:
\begin{equation}\label{WModel2}
\WC{Y}^2\WC{Z} 
= 4\WC{X}^3 - g_2 \WC{XZ}^2 -g_3 \WC{Z}^3 \ \ \ 
\Bigg(
\begin{array}{@{\,}l@{}}
g_2 = 27\,\PC{T}\,(\PC{T} - \PC{S}) (E^2 \PC{T} - F^3 \PC{S})^2 \\[1ex]
g_3 = 27\,\PC{T}\,(\PC{T} - \PC{S})^2 (E^2 \PC{T} - F^3 \PC{S})^3
\end{array}
\Bigg),
\end{equation}
which is generally in the case 
$[\, \kt{I}_1, \kt{II}, \kt{III}, \kt{I}_0^*\,]$ 
and particularly in the cases 
$[\, \kt{I}_1^*, \kt{II}, \kt{III} \,]$, 
$[\, \kt{I}_1, \kt{IV}^*, \kt{III} \,]$, 
$[\, \kt{I}_1, \kt{II}, \kt{III}^* ]$ 
depending on the vanishing points of the polynomials $E, F, V$ 
as shown below.
\begin{equation*}
\begin{array}{@{\ \ }c@{\ \ } *{3}{| @{\ \ }c@{\ \ }} }
[\, \kt{I}_1, \kt{II}, \kt{III}, \kt{I}_0^*\,]
  & [\, \kt{I}_1^*, \kt{II}, \kt{III} \,]
    & [\, \kt{I}_1, \kt{IV}^*, \kt{III} \,]
      & [\, \kt{I}_1, \kt{II}, \kt{III}^* ] \\
\hline
E,F,V \ne 0 & E=0 & F=0 & V=0 
\end{array}
\end{equation*}
\par
The above Weierstrass model determines 
our family of rational elliptic surfaces 
over the whole octahedral covering space $\PS[1]$ 
with the projectified parameter ${(\alpha : \beta)}$ 
(lying over the intermediate 
double covering space $\PS[1]$ with ${(E : V^2)}$).
Our family is locally represented on the points of $E \ne 0$ 
by the following Weierstrass form:
\begin{equation*}
\WC{Y}_1^{\,2}\WC{Z}_1^{} 
 = 4\WC{X}_1^{\,3} 
   - \nfrac{{g\mathstrut}_2}{E^4} \WC{X}_1^{} \WC{Z}_1^{\,2} 
   - \nfrac{{g\mathstrut}_3}{E^6} \WC{Z}_1^{\,3} 
\ \ \text{with}\ \ 
(\WC{X}_1 : \WC{Y}_1 : \WC{Z}_1) 
= \Big( \nfrac{\WC{X}}{\,E^2\!} 
        : \nfrac{\WC{Y}}{\,E^3\!} 
        : \nfrac{\WC{Z}}{1\mathstrut} \Big),
\end{equation*}
and on the points of $V^2 \ne 0$ by the following Weierstrass form:
\begin{equation*}
\WC{Y}_2^{\,2}\WC{Z}_2^{} 
 = 4\WC{X}_2^{\,3} 
   - \nfrac{{g\mathstrut}_2}{V^8} \WC{X}_2^{} \WC{Z}_2^{\,2} 
   - \nfrac{{g\mathstrut}_3}{V^{12}\!} \WC{Z}_2^{\,3} 
\ \ \text{with}\ \ 
(\WC{X}_2 : \WC{Y}_2 : \WC{Z}_2) 
= \Big( \nfrac{\WC{X}}{\,V^4\!} 
        : \nfrac{\WC{Y}}{\,V^6\!} 
        : \nfrac{\WC{Z}}{1\mathstrut} \Big).
\end{equation*}
These give locally trivialized versions of our family 
with respect to the parameter ${(\alpha : \beta)}$ (over ${(E : V^2)}$), 
and they are patched up by the relation below:
\begin{equation*}\textstyle
\WC{X}_2 = \Big( \nfrac{V^2\!}{E} \Big)^{\!\!-2} \WC{X}_1, \ \ 
\WC{Y}_2 = \Big( \nfrac{V^2\!}{E} \Big)^{\!\!-3} \WC{Y}_1, \ \   
\WC{Z}_2 = \WC{Z}_1.
\end{equation*}
Every elliptic surface in our family has the same base curve 
as the projective line $\PS[1]$ with ${(\PC{S}:\PC{T})}$ 
canonically parameterized by the $J$-function 
${J = \frac{\,\PC{T}\,}{\,\PC{S}\,}}$, 
which is independent of the parameter ${(\alpha : \beta)}$ of the family. 
Thus, the base curves of our elliptic surfaces 
make the trivial $\PS[1]$-bundle over the parameter of the family, 
and then ${(\WC{X} : \WC{Y} : \WC{Z})}$ 
should be interpreted as the global coordinate system 
of the $\PS[2]$-bundle 
${\PS(\LB{L}^2 \oplus \LB{L}^3 \oplus \LB{O}_{\PS[1] \times \PS[1]})}$
determined by a suitable fundamental line bundle $\LB{L}$ 
over the trivial bundle ${\PS[1] \times \PS[1]}$. 
It would be sufficient to set 
${\LB{L} = \LB{O}_{\PS[1] \times \PS[1]} (1, 12)}$ 
associated with the bihomogeneous forms of bidegree $(1,12)$ 
with respect to ${(\PC{S}:\PC{T})}$ and ${(\alpha : \beta)}$ 
because the Weierstrass coefficients $g_2, g_3$ are homogeneous forms 
of degree $4,6$ for ${(\PC{S}:\PC{T})}$ 
and of degree ${48 \,({=}\, 4 \cdot 12)}$, ${72 \,({=}\, 6 \cdot 12)}$ 
for ${(\alpha : \beta)}$, respectively, 
(and then $g_2, g_3$ are considered 
to be global sections of $\LB{L}^4, \LB{L}^6$).
\par
Now, we will describe sections 
for our rational elliptic surfaces of (\ref{WModel2}).
For this, we set $V_1, V_2, V_3$ to be 
the following polynomials of degree $2$ (with the relation):
\begin{equation*}
V_1 = -2\iu \alpha \beta, \ \ 
V_2 = \iu (\alpha^2 - \beta^2), \ \ 
V_3 = \alpha^2 + \beta^2 \ \ \ 
(\,V_1^{\,2} + V_2^{\,2} + V_3^{\,2} = 0\,).
\end{equation*}
They are associated with the antipodal pairs 
${(\infty, 0)}$, ${(1, -1)}$, ${(\iu, -\iu)}$ 
for the vertices of the octahedron, respectively. 
They are permuted up to sign 
by the action of the (binary) octahedral group, 
and the polynomials $V, E, F$ are represented by them as follows:
\begin{equation*}
\begin{array}{@{}l@{}}
V = 2\,V_1 V_2 V_3, \\[1ex]
E = 4 (V_1^{\,2} - V_2^{\,2})
       (V_1^{\,2} - V_3^{\,2})
        (V_2^{\,2} - V_3^{\,2}), \\[1ex]
F = - 4 (V_1^{\,2} V_2^{\,2} 
         + V_1^{\,2} V_3^{\,2} 
          + V_2^{\,2} V_3^{\,2}).
\end{array}
\end{equation*}
We have here the following main result:
%
%
\begin{Thm}\label{MainThm}
For the rational elliptic surfaces of the family $(\ref{WModel2})$, 
their generators of the sections are given 
by the following rational representation\/$:$ 
\begin{equation}\label{Generator}
\nfrac{\WC{X}}{\WC{Z}}
= \nfrac{F(\PC{T} - \PC{S})(E^2 \PC{T} - F^3 \PC{S})}{4(V_iV_j)^2}, \ \ 
\nfrac{\WC{Y}}{\WC{Z}}
= \nfrac{(\PC{T} - \PC{S})(E^2 \PC{T} - F^3 \PC{S})^2}{4(V_iV_j)^3},
\end{equation}
where the suffices $i,j$ run over the integers 
with the condition $1 \le i < j \le 3$.
\end{Thm}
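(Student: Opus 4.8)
The plan is to verify that the pair of rational functions in (\ref{Generator}) satisfies the Weierstrass equation of the family (\ref{WModel2}) identically in $\frac{\PC{T}}{\PC{S}}$, so that it defines a section, and then to recognize the three resulting sections (one per pair $(i,j)$) as a $\Z$-basis of the Mordell--Weil group $\MW(\ES)$ described in Section~\ref{Sec:3}. Throughout I abbreviate $P = \PC{T}-\PC{S}$, $Q = E^2\PC{T} - F^3\PC{S}$ and $w = (V_iV_j)^2$, so that the coefficients of (\ref{WModel2}) are $g_2 = 27\,\PC{T}\,PQ^2$ and $g_3 = 27\,\PC{T}\,P^2Q^3$, while the candidate section is $\frac{\WC{X}}{\WC{Z}} = \frac{FPQ}{4w}$ and $\frac{\WC{Y}}{\WC{Z}} = \frac{PQ^2}{4w\,V_iV_j}$.

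First I would substitute these into $\WC{Y}^2\WC{Z} = 4\WC{X}^3 - g_2\WC{X}\WC{Z}^2 - g_3\WC{Z}^3$; clearing denominators and removing the factor $P^2Q^3$ common to every term leaves $Q = F^3P - 108\,\PC{T}\,Fw^2 - 432\,\PC{T}\,w^3$, and substituting $P,Q$ back and cancelling the $\PC{S}$-terms together with the overall factor $\PC{T}$ reduces the whole verification to the single algebraic identity $E^2 = F^3 - 108\,F(V_iV_j)^4 - 432\,(V_iV_j)^6$, to be checked for each $1 \le i < j \le 3$. The key observation is that this follows at once from the recorded relation $E^2 = F^3 - 27V^4$ together with the constraint $V_1^2+V_2^2+V_3^2 = 0$: writing out the elementary symmetric functions of $V_1^2,V_2^2,V_3^2$ one has $F = -4(V_1^2V_2^2 + V_2^2V_3^2 + V_3^2V_1^2)$ and $V^4 = 16(V_1^2V_2^2V_3^2)^2$, while the three values $w = (V_iV_j)^2$ are precisely the roots of $t^3 - (V_1^2V_2^2 + V_2^2V_3^2 + V_3^2V_1^2)\,t^2 - (V_1^2V_2^2V_3^2)^2 = 0$, whose coefficient of $t$, namely $V_1^2V_2^2V_3^2\,(V_1^2+V_2^2+V_3^2)$, vanishes exactly because $V_1^2+V_2^2+V_3^2 = 0$. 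Hence each $w$ satisfies $V^4 = 16w^3 + 4Fw^2$, i.e.\ $27V^4 = 432w^3 + 108Fw^2$, and substituting into $E^2 = F^3 - 27V^4$ yields the desired identity simultaneously for all three pairs. This reduction is the conceptual core, and it explains why the \emph{symmetric} denominators $V_iV_j$, rather than the individual vertex invariants, appear in (\ref{Generator}).

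It then remains to see that the three sections generate $\MW(\ES)$. Via the birational correspondence (\ref{Transform}) each pulls back to the proper transform of a member of the cubic pencil (\ref{Cubics}) through the six base points on the conic of (\ref{Members}); since those base points split into three pairs corresponding to the three antipodal vertex pairs $V_1=0,\,V_2=0,\,V_3=0$ of the octahedron, and the octahedral group permutes these pairs and the three expressions (\ref{Generator}) compatibly, the three sections are matched with the base sections $s(e_1),s(e_2),s(e_3)$ of the preceding proposition. Alternatively and more self-containedly, one checks from the coordinate form at the reducible fibers that each section is disjoint from $s_0$, meets the non-identity component $u_1$ of the fiber $\kt{III}$ and a simple far component of the fiber $\kt{I}_0^*$; Shioda's height pairing then gives self-height $2\chi - \tfrac{1}{2} - 1 = \tfrac{1}{2}$ (with $\chi=1$) and vanishing mutual pairings, so the three span a sublattice isometric to $A_1^* \oplus A_1^* \oplus A_1^*$ of the same covolume as $\MW(\ES)$, hence all of it.

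The Weierstrass verification is routine once one spots its collapse to $E^2 = F^3 - 27V^4$, so I expect the genuine obstacle to be this last identification: confirming that the three explicit functions are \emph{primitive} generators correctly matched to the three lattice factors, rather than proper multiples or mixed combinations of the base sections. Carrying it out rigorously requires either an explicit solution of the six base points of (\ref{Cubics}) in the octahedral parameter $(\alpha:\beta)$ and their transport through (\ref{Transform}), or the careful determination --- from the behavior of (\ref{Generator}) at the fibers $\kt{III}$ and $\kt{I}_0^*$ --- of which components each section meets, as input to the height formula. In either route the transitive action of the octahedral group on the pairs $\{i,j\}$ is what organizes the bookkeeping and accounts for the uniform shape of (\ref{Generator}).
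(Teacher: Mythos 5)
Your proposal is correct, but it runs the core step in the opposite logical direction from the paper. The paper never substitutes into the Weierstrass equation: it transports the model through the birational correspondence (\ref{Transform}) to the cubic pencil (\ref{CubicPencil2}), factors the three-concurrent-lines member as $4V^4X^3 - F^3XZ^2 - F^3Z^3 = \prod_{i<j}\{4(V_iV_j)^2X - FZ\}$, reads off the six finite base points (\ref{BPt}) from these lines together with the conic member, and pulls the result back to Weierstrass coordinates; generation then comes for free, since the proposition of Section~\ref{Sec:3} already identifies the exceptional curves over these base points (modulo the negative-sign points being inverses) with the basis $s(e_1), s(e_2), s(e_3)$ of $\MW(\ES)$. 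Your verification instead certifies the given formulas, collapsing to the identity $E^2 = F^3 - 108F(V_iV_j)^4 - 432(V_iV_j)^6$; note that your symmetric-function derivation --- vanishing of $\sigma_2 = V_1^{\,2}V_2^{\,2}V_3^{\,2}(V_1^{\,2}+V_2^{\,2}+V_3^{\,2})$ together with $\sigma_1 = -\frac{1}{4}F$ and $\sigma_3 = \frac{1}{16}V^4$ --- is exactly the coefficient bookkeeping hidden in the paper's factorization (these are the coefficients of $X^2Z$, $XZ^2$, $X^3$), so the two arguments rest on the same algebraic identity in different clothing, and I checked your reduction to $Q = F^3P - 108\,\PC{T}Fw^2 - 432\,\PC{T}w^3$ is correct. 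The price of your direction is that generation must be proved separately: of your two routes, the first (matching to the base sections via (\ref{Transform})) is essentially the paper's proof re-entered through the back door, while the second (height pairing) is genuinely independent and does go through --- both coordinates in (\ref{Generator}) vanish at $\PC{T}=\PC{S}$ and at $E^2\PC{T}=F^3\PC{S}$, so each section passes through the cusp of the corresponding Weierstrass fiber $\WC{Y}^2\WC{Z}=4\WC{X}^3$ and hence meets $u_1$ at $\kt{III}$ and a far component at $\kt{I}_0^*$; the far components met are pairwise distinct because the limiting values $\xi = F/(4(V_iV_j)^2)$ are the three distinct roots of the cubic appearing in Section~\ref{Sec:5}; disjointness from $s_0$ holds since the coordinates are polynomials of degrees $2,3$ in $(\PC{S}:\PC{T})$; and torsion-freeness (Section~\ref{Sec:3}) upgrades your full-rank sublattice of discriminant $\frac{1}{8}$ to all of $\MW(\ES) \cong A_1^* \oplus A_1^* \oplus A_1^*$. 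In sum: same key identity, opposite direction; the paper's route explains where the formulas come from and gets generation automatically, while yours buys a self-contained, lattice-theoretic confirmation of generation at the cost of the fiber-component bookkeeping you correctly flagged as the delicate point.
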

\begin{proof}
The relations (\ref{Transform}) with ${(a, b) = (E^2, F^3)}$ 
gives the following representation: 
\begin{equation*}
\nfrac{\WC{X}}{\WC{Z}}
= (\PC{T} - \PC{S})(E^2 \PC{T} - F^3 \PC{S} ) \Big( \nfrac{X}{Z} \Big), \ \ \ 
\nfrac{\WC{Y}}{\WC{Z}}
= (\PC{T} - \PC{S})(E^2 \PC{T} - F^3 \PC{S} )^2 \Big( \nfrac{Y}{Z} \Big),
\end{equation*}
and our Weierstrass model (\ref{WModel2}) 
is transformed to the pencil of plane cubic curves 
on $\PS[2]$ with ${(X : Y : Z)}$ below:
\begin{equation}\label{CubicPencil2}
\begin{array}{l}
(E^2 \PC{T} - F^3 \PC{S} )\,Y^2Z 
 = 4(\PC{T} - \PC{S})\,X^3 -27\,\PC{T}\,XZ^2 - 27\,\PC{T}\,Z^3 \\[1.25ex]
\big( \Leftrightarrow 
\PC{T}\,( E^2 Y^2Z - 4X^3 + 27XZ^2 + 27Z^3) = \PC{S}\,( F^3 Y^2Z - 4X^3)
\ \big).
\end{array}
\end{equation}
In the pencil, 
we have the singular cubic curves (\ref{Members}) 
at the points ${\frac{\,\PC{T}\,}{\,\PC{S}\,} = \infty, 0, 1, \frac{F^3}{E^2}}$. 
These curves are represented now as follows:
\begin{equation}\label{Members2}
\left\{
\begin{array}{@{\ }l@{\quad\ }l}
E^2 Y^2Z = (X-3Z)(2X+3Z)^2
    & \text{(nodal cubic curve),} \\[1ex]
F^3 Y^2Z = 4X^3 
    & \text{(cuspidal cubic curve),} \\[1ex]
(V^4 Y^2 - XZ - Z^2)\,Z = 0 
    & \text{(conic and tangent line),} \\[1ex]
4 V^4 X^3 - F^3 XZ^2 - F^3 Z^3 = 0  
    & \text{(three concurrent lines).}
\end{array}\right.
\end{equation}
By using the polynomials $V_1, V_2, V_3$, 
we obtain the decomposition of the last singular cubic curve 
as the three concurrent lines below:
\begin{equation*}
\{ 4 (V_1 V_2)^2 X - F Z \}
\{ 4 (V_1 V_3)^2 X - F Z \}
\{ 4 (V_2 V_3)^2 X - F Z \} = 0.
\end{equation*}
Hence, together with the infinitely near base points ${(0:1:0)}$, 
we see that the remaining base points of the pencil 
are given by the following points:
\begin{equation}\label{BPt}
{}\hspace{3em}
\Big( \nfrac{X}{Z},\, \nfrac{Y}{Z} \Big)
= \Big( \nfrac{F}{4(V_i V_j)^2},\ \pm \nfrac{1}{4(V_i V_j)^3} \Big)
\hspace{2em} (\,1 \le i < j \le 3\,).
\end{equation}
The points of negative sign correspond 
to the inverse elements for the points of positive sign 
with respect to the group structure of the sections. 
By returning to the Weierstrass model, 
we obtain our representation for the generators of the sections. 
\end{proof}
%
%
\par
We mention here a consequence of the above theorem. 
As mentioned in Section \ref{Sec:2}, 
the moduli space $\MS$ of our elliptic surfaces 
is the projective line $\PS[1]$ with ${(a:b) = (E^2 : F^3)}$. 
The representation (\ref{Generator}) 
is not written rationally by the parameter of $\MS$. 
This means that some monodromy phenomenon appears in the sections. 
If we move along a closed path in $\MS$ 
within the general case 
$[\, \kt{I}_1, \kt{II}, \kt{III}, \kt{I}_0^*\,]$ 
of $\frac{\,b\,}{a} \ne \infty, 0, 1$, 
then the sections of elliptic surfaces 
are continuously transformed along the closed path. 
It induces a (generally non-trivial) permutation 
of the sections for a fixed elliptic surface 
over the point as the starting and ending of the closed path. 
This permutation is determined 
as the action of the fundamental group of 
the punctured moduli space ${\MS - \{ \infty, 0, 1 \}}$. 
\par
By the above theorem, 
this action of the fundamental group 
is described by the octahedral group 
as the covering transformations over $\MS$. 
The fundamental group is generated 
by (two of) three closed paths $\gamma_\infty, \gamma_0, \gamma_1$ 
encircling $\infty, 0, 1$, respectively once counterclockwise 
(with the composite path 
${\gamma_\infty \,{\cdot}\, \gamma_0 \,{\cdot}\, \gamma_1}$ 
homotoped to the constant path as the identity element). 
In the octahedral covering space of $\MS$, 
we have the points associated 
with the edges, the faces, the vertices of the octahedron, 
which are placed over the branched points $\infty, 0, 1$. 
Thus, the closed paths $\gamma_\infty, \gamma_0, \gamma_1$ 
determine covering transformations as rotations of angle 
$\pi, \frac{2\pi}{3}, \frac{\pi}{\,2\,}$, 
while the composite paths 
$\gamma_\infty^{\,2},\,\gamma_0^{\,3},\,\gamma_1^{\,4}$ act 
as the identity transformation, respectively. 
It determines a presentation of the octahedral group.
\par
Under the octahedral group action, 
the polynomials $V_1, V_2, V_3$ are permuted up to sign. 
They determine a three-dimensional irreducible 
linear representation of the octahedral group, 
which is isomorphic to 
the natural three-dimensional representation as rotations. 
Then the polynomials $V_1 V_2,\, V_1 V_3,\, V_2 V_3$ 
determine another representation, 
which is isomorphic to the former representation 
by taking the tensor with the sign representation. 
To sum up the above discussion, 
the representation (\ref{Generator}) 
for our generators of the sections corresponds to 
the latter three-dimensional representation of the octahedral group 
(since $F$ and $E^2$ are fixed under the group action). 
\par
We note that $E$ is not completely fixed 
by the sign representation of the octahedral group, 
and thus several correspondences between coordinates,  
such as the locally trivialized coordinates 
${(\WC{X}_1 : \WC{Y}_1 : \WC{Z}_1)}$ 
and ${(\WC{X}_2 : \WC{Y}_2 : \WC{Z}_2)}$,  
are not equivariant with respect to the octahedral group action. 
Hence, the sections (\ref{BPt}) might be observed
in the locally trivialized coordinates 
with different behavior concerning the group action. 
Such different behavior is also viewed 
for the intersection points described in the next section, 
where we use $E$ in Theorem \ref{Thm1} 
when giving complex values to the intersection points.
\section{Points in fibers intersected by sections}\label{Sec:5}%
We add remarks on the sections 
of our elliptic surfaces of the family (\ref{WModel2}). 
We will observe the intersection points 
that the sections pass through in a fixed fiber. 
\par
First, we observe them in a regular fiber. 
In working over $\C$, 
this fiber is considered to be a complex torus $\C / \Omega$ 
with a period lattice $\Omega$. 
The intersection points determines a subgroup of the complex torus. 
We denote the Weierstrass $\wp$-function by $\wp(z, \Omega)$ 
with $g_2(\Omega), g_3(\Omega)$ defined by Eisenstein series as follows: 
\begin{equation*}
\begin{array}{@{}l@{}}
\wp(z, \Omega) 
 = \nfrac{1}{z^2} + \dsum_{0 \ne \omega \in \Omega} 
    \Big\{ \nfrac{1}{(z-\omega)^2} - \nfrac{1}{\omega^2} \Big\}, \\[3ex]
g_2(\Omega) = 60 \dsum_{0 \ne \omega \in \Omega} 
                 \nfrac{1}{\omega^4}\,, \ \ 
g_3(\Omega) = 140 \dsum_{0 \ne \omega \in \Omega} 
                 \nfrac{1}{\omega^6}. \\[1ex]
\!\!\!\!\Big(\,
\wp'(z, \Omega)^2 = 4\,\wp(z, \Omega)^3 
                    - g_2(\Omega) \, \wp(z, \Omega) 
                    - g_3(\Omega).
\,\Big)
\end{array}
\end{equation*}
For our elliptic surfaces (\ref{WModel2}), 
we have $g_2(\Omega), g_3(\Omega) \ne 0$ 
since the $J$-value of the regular fiber 
is not equal to $\infty, 0, 1$. 
By Theorem \ref{MainThm}, the subgroup of $\C / \Omega$ 
is described in terms of the $\wp$-function as follows:
%
%
\begin{Thm}\label{Thm2}
For each regular fiber\/ $\C/\Omega$ 
of the rational elliptic surfaces of the family $(\ref{WModel2})$, 
the intersection points passed through by the sections 
form the subgroup generated by the points in $\C/\Omega$ 
satisfying the following condition\/$:$
\begin{equation}\label{Bpt2}
4\,\xi^3 - \nfrac{27J_0}{J_0-1}\,\xi - \nfrac{27J_0}{J_0-1} = 0
\quad 
\bigg(\, \xi = \nfrac{g_2(\Omega)\,\wp(z, \Omega)}
                 {g_3(\Omega)}, \,
       z \in \C/\Omega \bigg),
\end{equation}
where $J_0 = \frac{\,F^3}{\,E^2}$ is 
the $J$-value of the singular fiber $\kt{I}_0^*$ 
$( \kt{I}_1^*, \kt{IV}^*, \kt{III}^*$ 
in the particular cases of ${E=0,\, F=0,\, V=0}$ 
as $J_0 = \infty, 0, 1$, respectively$)$.
\end{Thm}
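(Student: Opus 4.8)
The plan is to pull the section formula of Theorem~\ref{MainThm} back to the abstract torus model of a regular fiber, show that the resulting quantity $\xi$ is in fact independent of the chosen fiber, and then recognise its three values as the roots of the stated cubic.

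First I would fix a regular fiber over $\frac{\PC{T}}{\PC{S}} = J$ (with $J \ne \infty,0,1,J_0$), on which the model (\ref{WModel2}) restricts to the affine curve $\WC{Y}^2 = 4\WC{X}^3 - g_2(J)\WC{X} - g_3(J)$ with specialised coefficients. Since this curve and $\C/\Omega$ share the same $J$-invariant, they differ by an admissible twist of the type (\ref{twist}), namely $\WC{X} = c\,\wp(z,\Omega)$ and $\WC{Y} = c^{3/2}\wp'(z,\Omega)$ with $c^2 = g_2(J)/g_2(\Omega)$ and $c^3 = g_3(J)/g_3(\Omega)$. The square-root ambiguity lives only in the $\WC{Y}$-coordinate; since $\xi = \frac{g_2(\Omega)\wp}{g_3(\Omega)}$ uses only $\WC{X}$, substituting $\wp = \WC{X}/c$ and $c = c^3/c^2$ yields the unambiguous identity $\xi = \frac{g_2(J)}{g_3(J)}\cdot\frac{\WC{X}}{\WC{Z}}$ for every point of the fiber.

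Next I would insert the generator coordinate $\frac{\WC{X}}{\WC{Z}} = \frac{F(\PC{T}-\PC{S})(E^2\PC{T}-F^3\PC{S})}{4(V_iV_j)^2}$ of Theorem~\ref{MainThm} together with $\frac{g_2}{g_3} = \frac{1}{(\PC{T}-\PC{S})(E^2\PC{T}-F^3\PC{S})}$ read off directly from (\ref{WModel2}). The factor $(\PC{T}-\PC{S})(E^2\PC{T}-F^3\PC{S})$ cancels, leaving $\xi = \frac{F}{4(V_iV_j)^2}$, which is manifestly independent of the fiber; this cancellation is the crux of the statement. The $\pm$ sign in (\ref{BPt}) affects only $\WC{Y}$, so each pair $(i,j)$ contributes a single value $\xi_{ij}$, and these are the $\WC{X}$-coordinates of the three Mordell-Weil generators together with their inverses. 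Finally I would verify that $\xi_{12},\xi_{13},\xi_{23}$ are the roots of the cubic by computing symmetric functions: writing $p = V_1^2,\,q = V_2^2,\,r = V_3^2$, the relation $V_1^2+V_2^2+V_3^2=0$ gives $p+q+r=0$, so $\sum\xi_{ij} = \frac{F}{4}\cdot\frac{p+q+r}{pqr} = 0$; using $pq+pr+qr = -\frac{F}{4}$ and $V^2 = 4pqr$ one gets $\sum\xi_{ij}\xi_{kl} = -\frac{F^3}{4V^4}$ and $\xi_{12}\xi_{13}\xi_{23} = \frac{F^3}{4V^4}$. The identity $E^2 = F^3 - 27V^4$ turns $J_0 - 1 = \frac{27V^4}{E^2}$, whence $\frac{F^3}{V^4} = \frac{27J_0}{J_0-1}$, and the monic cubic with these roots is precisely $4\xi^3 - \frac{27J_0}{J_0-1}\xi - \frac{27J_0}{J_0-1} = 0$ after clearing the leading coefficient. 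The subgroup generated by the corresponding points $\pm z_{ij}\in\C/\Omega$ is then the image of $\MW(\ES)$ in the fiber, as asserted.

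The step I expect to be the main obstacle is the first: making the twist between the surface's Weierstrass coefficients and the Eisenstein-series coefficients $g_2(\Omega),g_3(\Omega)$ fully precise while cleanly isolating the square-root ambiguity in $\WC{Y}$, so that the fiber-independence of $\xi$ falls out without sign trouble. Once $\xi = \frac{F}{4(V_iV_j)^2}$ is in hand, the remaining symmetric-function computation is routine given the octahedral identities already recorded before the theorem.
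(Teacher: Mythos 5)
Your proposal is correct and takes essentially the same route as the paper: the paper likewise uses the scale-invariance of $\xi$ under $\Omega \mapsto c\,\Omega$ (your twist argument, phrased on the lattice side) to identify $\xi = \frac{g_2}{g_3}\frac{\WC{X}}{\WC{Z}} = \frac{1}{(\PC{T}-\PC{S})(E^2\PC{T}-F^3\PC{S})}\frac{\WC{X}}{\WC{Z}}$, then substitutes the generators of Theorem \ref{MainThm} to obtain the fiber-independent values $\frac{F}{4(V_iV_j)^2}$. The only difference is in the final verification, where you compute elementary symmetric functions of the three values, while the paper gets the cubic for free because these values are by construction the roots of the three-concurrent-lines member $4V^4X^3 - F^3XZ^2 - F^3Z^3 = 0$ of the pencil (\ref{Members2}), combined with $\frac{F^3}{V^4} = \frac{27J_0}{J_0-1}$ --- but your symmetric-function computation (using $V_1^{\,2}+V_2^{\,2}+V_3^{\,2}=0$, $\sum V_i^{\,2}V_j^{\,2} = -\frac{F}{4}$, $V^2 = 4V_1^{\,2}V_2^{\,2}V_3^{\,2}$ and $E^2 = F^3-27V^4$) is accurate and establishes the same fact.
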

\begin{proof}
The value $\xi$ is determined independently 
for the scale of $\C/\Omega$, that is, 
\begin{equation*}
\nfrac{g_2(c\,\Omega)\,\wp(cz, c\,\Omega)}
      {g_3(c\,\Omega)}
 = \nfrac{g_2(\Omega)\,\wp(z, \Omega)}
       {g_3(\Omega)} \ \ \ \ (\, c \in \C^*\,),
\end{equation*}
and it is represented in terms of the coordinates 
of the elliptic surfaces $(\ref{WModel2})$ as follows:
\begin{equation*}
\nfrac{g_2(\Omega)\,\wp(z, \Omega)}
      {g_3(\Omega)}
 = \nfrac{g_2}
         {g_3} \nfrac{\mcal{X}}
                     {\mcal{Z}}
 = \nfrac{1}
         {(\PC{T} - \PC{S}) (E^2 \PC{T} - F^3 \PC{S})} 
   \nfrac{\mcal{X}}
         {\mcal{Z}}.
\end{equation*}
For the generators (\ref{Generator}) (and their inverses) of the sections, 
their corresponding values of $\xi$ 
are given by the values of $\frac{\,X\,}{Z}$ in (\ref{BPt}). 
Thus, they satisfy the condition (\ref{Bpt2}). 
\end{proof}
%
%
\par
In the general case of ${J_0 \ne \infty, 0, 1}$, 
the above cubic equation (\ref{Bpt2}) 
determines three distinct values in $\C$. 
Together with $\infty$ associated with the $0$-section, 
these correspond to the four intersection points 
among the components of the singular fiber $\kt{I}_0^*$. 
They lie on the central component, 
and its double covering space branched at them 
is isomorphic to the elliptic curve having the $J$-value $J_0$ 
with the following affine model:
\begin{equation*}
\eta^2 = 4\,\xi^3 - \nfrac{27J_0}{J_0-1}\,\xi - \nfrac{27J_0}{J_0-1}. 
\end{equation*}
Then the equation (\ref{Bpt2}) with $\eta = 0$ 
determines the $2$-division points of the elliptic curve. 
When considering $J_0$ to be a parameter, 
the above affine model gives a pencil of plane cubic curves, 
which corresponds to the rational elliptic surface of the case 
$[\, \kt{I}_1, \kt{II}, \kt{III}^* ]$.
\par
Next, we observe the intersection points 
that the sections pass through in a singular fiber, 
particularly of type $\kt{I}_1$ or $\kt{II}$. 
For our elliptic surfaces of the family (\ref{WModel2}), 
the type $\kt{I}_1$ of singular fibers appears 
except the case $[\, \kt{I}_1^*, \kt{II}, \kt{III} \,]$ of ${E=0}$, 
and the type $\kt{II}$ appears 
except the case $[\, \kt{I}_1, \kt{IV}^*, \kt{III} \,]$ of ${F=0}$. 
According to Kodaira's work \cite{Ko}, 
singular fibers have group structures 
induced by those of regular fibers around them, 
which are described for respective types of singular fibers 
as shown below. 
\begin{equation*}
\begin{array}{@{\ \ }c@{\ \ } *{4}{| @{\ \ }c@{\ \ }} }
\kt{I}_n \,(n \ge 1) 
  & \kt{I}_{2n}^*,\, \kt{I}_{2n+1}^* \,(n \ge 0) 
    & \,\kt{II},\, \kt{II}^* 
      & \,\kt{III},\, \kt{III}^* 
        & \,\kt{IV},\, \kt{IV}^* \\
\hline
\! \C^* \times \Z_n  & \C \times (\Z_2)^2,\, \C \times \Z_4 
                     & \C 
                     & \C \times \Z_2  & \C \times \Z_3  \\
\end{array}
\end{equation*}
In the above list, we denote 
by $\Z_n \,({=}\, \Z / n \Z)$ the cyclic group of order $n$, 
and by $\C$ the additive group of the complex numbers, 
and by $\C^*$ the multiplicative group of the non-zero complex numbers. 
For singular fibers of types $\kt{I}_1$ and $\kt{II}$, 
their group structures are isomorphic to $\C^*$ and $\C$, 
which are formed in the fibers by the smooth points 
except the node and the cusp, respectively. 
Then the intersection points passed through by the sections 
determine subgroups of $\C^*$ and $\C$ (up to isomorphisms of groups). 
\par
To describe these subgroups of $\C^*$ and $\C$, 
we define several polynomials 
in addition to $V_1, V_2, V_3$ as follows:
\begin{equation*}
\begin{array}{@{}l@{}}
E_1 = V_2^{\,2} - V_3^{\,2} 
    = -2\,(\alpha^4 + \beta^4 ), \\[1ex]
E_2 = V_3^{\,2} - V_1^{\,2} 
    = \alpha^4 + 6\,\alpha^2 \beta^2 + \beta^4, \\[1ex]
E_3 = V_1^{\,2} - V_2^{\,2} 
    = \alpha^4 - 6\,\alpha^2 \beta^2 + \beta^4. 
\end{array}
\ \ \ \ 
\left(
\begin{array}{@{\ }l@{\ }}
E_1 + E_2 + E_3 = 0, \\[1ex]
E_1 E_2 + E_1 E_3 + E_2 E_3 = -\frac{\,3\,}{4} F,\! \\[1ex]
E_1 E_2 E_3 = -\frac{\,1\,}{4} E.
\end{array}
\right)
\end{equation*}
We further define the following polynomials, 
firstly associated with $E_1$:
\begin{equation*}
\begin{array}{@{}l@{}}
E_1^{+} = \sqrt{2} \iu E_1 + 3 V_2 V_3
        = \phantom{-}\iu \{ (\sqrt{2}-1)^2\,\alpha^4 - (\sqrt{2}+1)^2\,\beta^4 \}, \\[1ex]
E_1^{-} = \sqrt{2} \iu E_1 - 3 V_2 V_3
        = -\iu \{ (\sqrt{2}+1)^2\,\alpha^4 - (\sqrt{2}-1)^2\,\beta^4 \},
\end{array}
\end{equation*}
and secondly associated with $E_2$:
\begin{equation*}
\begin{array}{@{}l@{}}
E_2^{+} = \sqrt{2} \iu E_2 + 3 V_1 V_3
        = \sqrt{2} \iu 
           (\alpha^2 - \sqrt{2}\,\alpha \beta + \beta^2)
           (\alpha^2 - 2 \sqrt{2}\,\alpha \beta + \beta^2), \\[1ex]
E_2^{-} = \sqrt{2} \iu E_2 - 3 V_1 V_3 
        = \sqrt{2} \iu 
           (\alpha^2 + \sqrt{2}\,\alpha \beta + \beta^2)
           (\alpha^2 + 2 \sqrt{2}\,\alpha \beta + \beta^2),
\end{array}
\end{equation*}
and thirdly associated with $E_3$:
\begin{equation*}
\begin{array}{@{}l@{}}
E_3^{+} = \sqrt{2} \iu E_3 + 3 V_1 V_2
        = \sqrt{2} \iu 
           (\alpha^2 - \sqrt{2} \iu \alpha \beta - \beta^2)
           (\alpha^2 - 2 \sqrt{2} \iu \alpha \beta - \beta^2), \\[1ex]
E_3^{-} = \sqrt{2} \iu E_3 - 3 V_1 V_2 
        = \sqrt{2} \iu 
           (\alpha^2 + \sqrt{2} \iu \alpha \beta - \beta^2)
           (\alpha^2 + 2 \sqrt{2} \iu \alpha \beta - \beta^2).
\end{array}
\end{equation*}
Each of them gives a factor of the polynomial $E$, 
whose zeros correspond to the middle points 
of certain four edges in the octahedron. 
In fact, we have the following relations among them: 
\begin{equation*}
{}\hspace{1em}
E_k^{+} E_k^{-} = E_i E_j
\ \ \ \text{and} \ \ \ 
E = -4 E_k^{+} E_k^{-} E_k^{}
\ \ \ \ \ \big( \{ i,j,k \} = \{ 1,2,3 \} \big).
\end{equation*}
The zeros of the former polynomials 
$E_1, E_2, E_3$ are determined by 
the edges not connected to either vertex of the antipodal pairs 
${(\infty, 0), (1, -1), (\iu, -\iu)}$, respectively, 
and the zeros of the latter polynomials 
$E_1^{+}, E_1^{-}, E_2^{+}, E_2^{-}, E_3^{+}, E_3^{-}$ 
are determined by the edges 
connected to $\infty, 0, 1, -1, \iu, -\iu$, respectively. 
We now describe the subgroups of $\C^*$ and $\C$ as follows:
%
%
\begin{Thm}\label{Thm1}
For the rational elliptic surfaces of the family $(\ref{WModel2})$, 
the groups of the intersection points that the sections pass through 
in the singular fibers of types $\kt{I}_1$ and\/ $\kt{II}$  
are described by the parameter ${(\alpha : \beta)}$ of the family as follows\/$:$ 
\begin{enumerate}
\item For the type\/ $\kt{I}_1\,($appearing except the case 
      $[\, \kt{I}_1^*, \kt{II}, \kt{III} \,]$ of ${E=0}\,)$, 
      the group of the intersection points is isomorphic 
      to the multiplicative subgroup of\/ $\C^*$ 
      generated by the three values of the ratios 
      $\frac{\,E_1^{+}}{\,E_1^{-}}, 
       \frac{\,E_2^{+}}{\,E_2^{-}},
       \frac{\,E_3^{+}}{\,E_3^{-}}$.
\item For the type $\kt{II} \ ($appearing except the case 
      $[\, \kt{I}_1, \kt{IV}^*, \kt{III} \,]$ of ${F=0}\,)$, 
      the group of the intersection points is isomorphic 
      to the additive subgroup of\/ $\C$ 
      generated by three representative values 
      of the homogeneous forms\/ $V_2 V_3,\, V_1 V_3,\, V_1 V_2$. 
\end{enumerate}
\end{Thm}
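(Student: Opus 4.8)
The plan is to exploit the fact that the nine base points of the cubic pencil lie on \emph{every} member of the pencil, in particular on the nodal cubic (the $\kt{I}_1$ fiber) and the cuspidal cubic (the $\kt{II}$ fiber) exhibited in $(\ref{Members2})$. Restricting an irreducible section to such an irreducible singular fiber gives a point on its smooth locus, and the resulting map $\MW(\ES)\to(\text{group of smooth points})$ is a homomorphism, since the group law on the smooth locus is the specialization of the relative group law; the smooth locus is isomorphic to $\C^*$ for $\kt{I}_1$ and to $\C$ for $\kt{II}$. Hence the group of intersection points is exactly the image of this homomorphism, and it is generated by the images of the three base sections. By Theorem \ref{MainThm} these images are the base points $(\ref{BPt})$, which avoid both the node and the cusp, so everything reduces to computing the group coordinate of each of the three base points $(i,j)$.

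For part (2) I would first put the cuspidal cubic $F^3Y^2Z=4X^3$ into the normal form $y^2=x^3$ by a scaling $y=\lambda Y,\ x=\mu X$ with $\mu^3=4\lambda^2/F^3$; for $y^2=x^3$ the additive group of smooth points is given by the coordinate $t=x/y$, with the identity at the flex $(0:1:0)$ (the point of the $0$-section). From $(\ref{BPt})$ one reads off $X/Y=F\,V_iV_j$ at the base point indexed by $(i,j)$, so $t$ is a fixed nonzero multiple of $V_iV_j$, and the sign $\pm$ in $(\ref{BPt})$ produces $\pm t$, i.e.\ inverse elements. Thus the image subgroup is generated by $V_2V_3,\,V_1V_3,\,V_1V_2$ up to the scaling realizing the isomorphism with $\C$, which is assertion (2).

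For part (1) I would locate the node of $E^2Y^2Z=(X-3Z)(2X+3Z)^2$ at $(X:Y:Z)=(-3:0:2)$, parametrize the smooth locus by the slope $t=Y/(X+\tfrac32 Z)$ of the line joining the node to a point, and compute the two branch slopes $t_\pm=\pm\,3\sqrt{2}\,\iu/E$. The multiplicative coordinate carrying the node branches to $0,\infty$ and the flex $t=\infty$ to $1$ is $\lambda=(t-t_+)/(t-t_-)$; under it collinear triples satisfy $\lambda_1\lambda_2\lambda_3=1$, so $\lambda$ is the required isomorphism onto $\C^*$ (up to the harmless inversion coming from the labelling of $t_\pm$). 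Evaluating $t$ at the base point $(i,j)$ and simplifying with $V_1^2+V_2^2+V_3^2=0$, together with the identities $E=-4E_1E_2E_3$ and $E_k^{+}E_k^{-}=E_iE_j$, collapses $\lambda$ to the ratio $E_k^{+}/E_k^{-}$ with $\{i,j,k\}=\{1,2,3\}$, giving assertion (1).

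The main obstacle is the algebra in part (1): both $t$ and $t_\pm$ are unwieldy, and the clean value $E_k^{+}/E_k^{-}$ only appears after recognizing the factorizations $\sqrt{2}\,\iu\,E_k^{+}=-2E_k+3\sqrt{2}\,\iu\,V_iV_j$ and $\sqrt{2}\,\iu\,E_k^{-}=-2E_k-3\sqrt{2}\,\iu\,V_iV_j$, which allow $t-t_+$ and $t-t_-$ to be written with a common factor that cancels in the quotient. The conceptual point that must be settled before any of this is meaningful is the correct normalization of the $\C^*$-coordinate, namely that the identity sits at the flex and that collinear points multiply to $1$; once this normalization and the factorizations are in hand, part (2) and the identification of the generators are comparatively routine.
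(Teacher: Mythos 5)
Your proposal is correct and takes essentially the same route as the paper's proof: the paper likewise restricts the sections to the nodal and cuspidal members of the pencil (\ref{CubicPencil2}), places the identity at $(0:1:0)$, and uses the very same coordinates---your $\lambda=(t-t_+)/(t-t_-)$ coincides with its $\mu=\frac{2E\,Y-3\sqrt{2}\iu\,(2X+3Z)}{2E\,Y+3\sqrt{2}\iu\,(2X+3Z)}$, and your $t=x/y$ agrees with its $\nu=\frac{1}{F}\frac{X}{Y}$ up to the scalar ambiguity the statement already allows---before evaluating at the base points (\ref{BPt}) and simplifying with $E=-4E_iE_jE_k$ and $F+6(V_iV_j)^2=-2E_iE_j$. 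The only cosmetic difference is that you normalize the $\C^*$-coordinate via the collinearity relation $\lambda_1\lambda_2\lambda_3=1$ with the flex at $1$, whereas the paper verifies its isomorphisms by exhibiting explicit inverse parametrizations.
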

\begin{proof}
In the cubic pencil (\ref{CubicPencil2}) 
corresponding to the family (\ref{WModel2}), 
the singular fibers of types $\kt{I}_1, \kt{II}$ are represented by 
the nodal, cuspidal singular cubic curves of (\ref{BPt}). 
They are transformed to the following affine forms:
\begin{equation*}
E^2 \Big( \nfrac{Y}{Z} \Big)^{\!2} 
= 4 \Big( \nfrac{X}{Z} + \nfrac{3}{2} \Big)^{\!3} 
  -18 \Big( \nfrac{X}{Z} + \nfrac{3}{2} \Big)^{\!2}
\ \ \ \text{and} \ \ \ 
F^3 \Big( \nfrac{Y}{Z} \Big)^{\!2} 
= 4 \Big( \nfrac{X}{Z} \Big)^{\!3}. 
\end{equation*}
Their nodal, cuspidal points are given 
with tangent cones at them as follows:
\begin{equation*}
\begin{array}{@{}l@{}l@{}l@{}}
\kt{I}_1 : \text{node} \ 
 & \Big( \nfrac{X}{Z},\, \nfrac{Y}{Z} \Big)
   = \Big( {-}\nfrac{3}{2},\,0\,\Big) 
   & \ \ \text{with} \ \ 
     E \Big( \nfrac{Y}{Z} \Big)
     = \pm 3 \sqrt{2} \iu \Big( \nfrac{X}{Z} + \nfrac{3}{2} \Big), \\[2.5ex]
\kt{II} : \text{cusp} \ 
 & \Big( \nfrac{X}{Z},\, \nfrac{Y}{Z} \Big)
   = \Big(\, 0,\,0\,\Big) 
   & \ \ \text{with} \ \ \ 
     \Big( \nfrac{Y}{Z} \Big)^2 = 0.
\end{array}
\end{equation*}
For the group structures of the fibers, 
the identity elements are the points intersected by the $0$-section, 
which are placed upon the point ${(0:1:0)}$ 
in $\PS[2]$ with coordinates ${(X:Y:Z)}$. 
We can check that 
the groups $\C^*$ and $\C$ are isomorphic 
to the groups of the singular fibers $\kt{I}_1$ and $\kt{II}$. 
In fact, isomorphisms between them are given, respectively, 
for $\C^*$ and the fiber $\kt{I}_1$ by the following correspondence:
\begin{equation*}
\begin{array}{@{}l@{}}
\mu = \nfrac{E ( \frac{\,Y\,}{Z} )
             - 3 \sqrt{2} \iu ( \frac{\,X\,}{Z} + \frac{\,3\,}{2} )}
            {E ( \frac{\,Y\,}{Z} )
             + 3 \sqrt{2} \iu ( \frac{\,X\,}{Z} + \frac{\,3\,}{2} )} 
    \ \bigg({=}\, \nfrac{2E\,Y - 3 \sqrt{2} \iu (2X + 3Z)}
                        {2E\,Y + 3 \sqrt{2} \iu (2X + 3Z)} \bigg)\\[3ex]
\leftrightarrow\ 
\Big( \nfrac{X}{Z},\, \nfrac{Y}{Z} \Big)
= \Big( {-} \nfrac{3(\mu^2 + 10 \mu + 1)}
                  {2(\mu-1)^2},\,
        \nfrac{54\sqrt{2} \iu \mu (\mu+1)}
              {E (\mu-1)^3} \,\Big),
\end{array}
\end{equation*}
and for $\C$ and the fiber $\kt{II}$ by the following correspondence:
\begin{equation*}
\nu = \nfrac{1}{F} \nfrac{\frac{\,X\,}{Z}}
                         {\frac{\,Y\,}{Z}} 
    \ \bigg({=}\, \nfrac{1}{F} \nfrac{X}{Y} \bigg)
\ \leftrightarrow\ 
\Big( \nfrac{X}{Z},\, \nfrac{Y}{Z} \Big)
= \Big( \nfrac{F}{4 \nu^2},\,
        \nfrac{1}{4 \nu^3} \,\Big),
\end{equation*}
where ${\mu = 1}$ and ${\nu = 0}$ correspond 
to the point ${(0:1:0)}$ at infinity, 
(${\mu = 0, \infty}$ give the node and ${\nu = \infty}$ gives the cusp).
\par
Under the above isomorphisms, 
subgroups of $\C^*$ and $\C$ are determined by the intersection points 
that the sections pass through in the fibers $\kt{I}_1$ and $\kt{II}$. 
The isomorphism between $\C^*$ and the fiber $\kt{I}_1$ 
is unique up to reciprocal ratios 
(choices of the orders for the two components of the tangent cone). 
Thus, the subgroup of $\C^*$ itself 
is given without depending on the isomorphism.
On the other hand, 
the isomorphism between $\C$ and the fiber $\kt{II}$ 
is not unique by scalar multiplication of $\C^*$. 
Thus, the subgroup of $\C$ is given depending on a representative 
of the parameter $(\alpha : \beta)$ of the family (\ref{WModel2}).
\par
In the fibers $\kt{I}_1$ and $\kt{II}$, 
the generators (\ref{Generator}) of the sections 
pass through the points of (\ref{BPt}). 
Hence, the subgroups of $\C^*$ and $\C$ are generated 
by these corresponding values in $\C^*$ and $\C$. 
For the fiber $\kt{II}$, 
we immediately see that the values in $\C$ 
are represented by $V_2 V_3,\, V_1 V_3,\, V_1 V_2$. 
For the fiber $\kt{I}_1$, 
we have the following values in $\C^*$:
\begin{equation*}
{}\hspace{3em}
\mu = \nfrac{E - 3\sqrt{2} \iu (V_i V_j)\{ F + 6(V_i V_j)^2 \}}
            {E + 3\sqrt{2} \iu (V_i V_j)\{ F + 6(V_i V_j)^2 \}}
\hspace{2em} (\,1 \le i < j \le 3\,).
\end{equation*}
It is easy to check the following relations:
\begin{equation*}
{}\hspace{1em}
E = -4 E_i E_j E_k
\ \ \ \text{and} \ \ \ 
F + 6(V_i V_j)^2 = -2 E_i E_j
\ \ \ \ \ \big( \{ i,j,k \} = \{ 1,2,3 \} \big).
\end{equation*}
By using these relations, we obtain the ratios
${\frac{\,E_k^{+}}{\,E_k^{-}} \ (k=1,2,3)}$.
\end{proof}
%
%
\par
We mention several algebraic aspects of the generators 
given in the above theorem. 
Under the action of the octahedral group, 
the three ratios for the type $\kt{I}_1$ 
are permuted up to their reciprocal ratios. 
Each ratio is fixed by a certain cyclic subgroup of index $6$ 
that consists of rotations around antipodal two vertices of the octahedron. 
Thus, the three ratios determines three extension fields of degree $6$ 
over the function field of the moduli space ${\MS = \PS[1]}$ 
with the parameter ${(a : b) = (E^2 : F^3)}$, 
while the composite field of them corresponds to the function field 
of the octahedral covering space $\PS[1]$ with ${(\alpha : \beta)}$.
\par
In more detail, the three ratios are 
algebraically integral and invertible elements 
over the polynomial ring $\Z \texttt{[} J_0 \texttt{]}$ 
of ${J_0 = \frac{\,b\,}{\,a\,}}$ with integer coefficients. 
In fact, we immediately see the relations below:
\begin{equation*}
{}\hspace{1em}
E_k^{+} + E_k^{-} = 2 \sqrt{2} \iu E_k
\ \ \ \text{and} \ \ \ 
4 E_k^{\,3} -3F E_k + E = 0 
\ \ \ \ (\, k = 1,2,3 \,),
\end{equation*}
and then we have the following equalities:
\begin{equation*}
\nfrac{E_k^{+}}
      {E_k^{-}} + \nfrac{E_k^{-}\!}
                          {E_k^{+}\!} 
\,=\, - 8 \,\nfrac{E_k^{\,2}}
                  {E_k^{+} E_k^{-}} - 2
\,=\, 32 \,\nfrac{E_k^{\,3}}{E} - 2
\,=\, 24 \,\nfrac{F E_k}{E} - 10.
\end{equation*}
Thus, the three ratios and their reciprocal ratios 
satisfy the following monic polynomial equation of degree $6$ 
with coefficients in $\Z \texttt{[} J_0 \texttt{]}$:
\begin{equation*}
(\, x^2 + 10\,x + 1 \,)^3
 - 3 \cdot 12^2 J_0 \,x^2 (\, x^2 + 10\,x + 1 \,)
 + 2 \cdot 12^3 J_0 \,x^3 = 0.
\end{equation*}
Therefore, the ratios are certainly integral and invertible 
over $\Z \texttt{[} J_0 \texttt{]}$.
\par
On the other hand, the three homogeneous forms for the type $\kt{II}$ 
satisfy the following relation:
\begin{equation*}
(V_1 V_2)^2 (V_1 V_3)^2 
 + (V_1 V_2)^2 (V_2 V_3)^2 
   + (V_1 V_3)^2 (V_2 V_3)^2 = 0.
\end{equation*}
Hence, without depending on 
a representative of the parameter ${(\alpha : \beta)}$, 
the projective point ${(V_2 V_3 : V_1 V_3 : V_1 V_2)}$ in $\PS[2]$ 
is placed on the quartic curve defined by the above relation. 
This curve has three nodal points, 
which are given in association with 
${V_1=0}$, ${V_2=0}$, ${V_3=0}$, respectively. 
They correspond to the three antipodal pairs 
of the vertices for the octahedron. 
\par
Our octahedral covering space $\PS[1]$ with ${(\alpha : \beta)}$ 
is embedded into $\PS[2]$ by ${(V_1 : V_2 : V_3)}$, 
whose image is the conic defined by the following relation:
\begin{equation*}
V_1^{\,2} + V_2^{\,2} + V_3^{\,2} = 0.
\end{equation*}
Hence, the above quartic curve is considered 
to be the image of this conic by the following Cremona transformation:
\begin{equation*}
(\, V_1 : V_2 : V_3 \,) 
\ \ \mapsto \ \ 
(\, V_2 V_3 : V_1 V_3 : V_1 V_2 \,).
\end{equation*}
\par
Under the action of the octahedral group, 
the three homogeneous forms are permuted up to sign, 
and ratios among them are considered to be in the following:
\begin{equation*}
\pm \nfrac{V_2}{V_3}\,, \ \,
\pm \nfrac{V_3}{V_2}\,, \ \,
\pm \nfrac{V_1}{V_3}\,, \ \,
\pm \nfrac{V_3}{V_1}\,, \ \,
\pm \nfrac{V_1}{V_2}\,, \ \,
\pm \nfrac{V_2}{V_1}.
\end{equation*}
These $12$ ratios are algebraically determined 
over $\Z \texttt{[} J_0 \texttt{]}$ by the equation below:
%
\begin{equation*}
4(\, x^4 + x^2 + 1 \,)^3
 - J_0 (\, x^2 - 1 \,)^2 
        (\, 2x^2 + 1 \,)^2 
         (\, x^2 + 2 \,)^2 = 0.
\end{equation*}
It is shown from the following equalities 
with cyclically ordered suffices $i,j,k$ in $\{ 1, 2, 3\}$:
\begin{equation*}
\begin{array}{@{}l@{}}
J_0 = \nfrac{F^3}{E^2}
    = - \nfrac{4( V_i^{\,2} V_j^{\,2} 
                    + V_i^{\,2} V_k^{\,2} 
                         + V_j^{\,2} V_k^{\,2} )^3}
              {( V_i^{\,2} - V_j^{\,2} )^2
                   ( V_i^{\,2} - V_k^{\,2} )^2
                     ( V_j^{\,2} - V_k^{\,2} )^2} \\[2.75ex]
\phantom{J_0 = \nfrac{F^3}{E^2}}
      = \nfrac{4( V_i^{\,4} 
                   + V_i^{\,2} V_j^{\,2} 
                       + V_j^{\,4} )^3}
              {( V_i^{\,2} - V_j^{\,2} )^2
                 ( 2 V_i^{\,2} + V_j^{\,2} )^2
                   ( V_i^{\,2} + 2 V_j^{\,2} )^2}.
\end{array}
\end{equation*}
The ratio $\smash{\frac{\,V_i}{\,V_j}}$ is fixed 
by a certain rotation of angle $\pi$ around antipodal two vertices, 
and then it belongs to a extension field of degree $2$ 
over the function field determined 
by the ratio $\frac{\,E_k^{+}}{\,E_k^{-}}$. 
We can see this from the following relation:
\begin{equation*}
\hspace{3em}
\nfrac{E_k^{+}}
      {E_k^{-}}
 = \nfrac{\sqrt{2} \iu \big\{ \big( \frac{V_i}{V_j} \big)^2 -1 \big\} + 3 \big( \frac{V_i}{V_j} \big)}
         {\sqrt{2} \iu \big\{ \big( \frac{V_i}{V_j} \big)^2 -1 \big\} - 3 \big( \frac{V_i}{V_j} \big)}
\ \,\Bigg( 
{=}\  \nfrac{\sqrt{2} \iu ( V_i^{\,2} - V_j^{\,2} ) + 3 V_i V_j}
         {\sqrt{2} \iu ( V_i^{\,2} - V_j^{\,2} ) - 3 V_i V_j}
\,\Bigg).
\end{equation*}
The ratio $\frac{\,V_i}{\,V_j}$, more precisely 
${\!\iu\! \frac{\,V_i}{\,V_j} \ (\iu \!=\! \sqrt{-1}\,)}$, 
has a relationship to an object usually called the elliptic modulus, 
which we mention in the next section. 
\par
We have three values ${\xi_k\,(k=1,2,3)}$
determined by the equation (\ref{Bpt2}), 
which are given as the following ratios:
\begin{equation*}
\xi_1 = \nfrac{F}{4(V_2 V_3)^2}\,, \ \ 
\xi_2 = \nfrac{F}{4(V_1 V_3)^2}\,, \ \ 
\xi_3 = \nfrac{F}{4(V_1 V_2)^2}.
\end{equation*}
Under the action of the octahedral group, 
these ratios are permuted among them, 
each of which is fixed by a certain dihedral subgroup of index $3$ 
associated with a square in the octahedron. 
Then the ratio $\xi_k$ gives a extension field of degree $3$ 
over the function field determined by ${J_0 = \frac{\,F^3}{\,E^2}}$. 
The ratios $\frac{\,E_k^{+}}{\,E_k^{-}}$ and $\frac{\,V_i}{\,V_j}$ 
are placed over this extension field. 
In fact, we have the following equalities:
\begin{equation*}
\bigg( \nfrac{V_i^{\,2} - V_j^{\,2}}
             {V_i V_j} \bigg)^2
= \nfrac{(V_i^{\,4} + V_i^{\,2} V_j^{\,2} + V_j^{\,4}) - 3 (V_i V_j)^2}
        {(V_i V_j)^2}
= \nfrac{F}{4(V_i V_j)^2} - 3,
\end{equation*}
and thus the ratios 
$\frac{\,E_k^{+}}{\,E_k^{-}}$ and 
$\frac{\,E_k^{-}}{\,E_k^{+}}$ 
are considered to be represented as follows:
\begin{equation*}
\nfrac{E_k^{+}}
      {E_k^{-}},\,
\nfrac{E_k^{-}}
      {E_k^{+}}
\,=\, \nfrac{{\pm \sqrt{-2(\xi_k-3)}} + 3}
            {{\pm \sqrt{-2(\xi_k-3)}} - 3}
\ \,\Bigg( 
{=}\  \nfrac{2\xi_k-15 \,\mp\, 6 \sqrt{-2(\xi_k-3)}}
            {2\xi_k+3}
\,\Bigg).
\end{equation*}
Also, we have the following equalities:
\begin{equation*}
\xi_i + \xi_j + \xi_k = 0
\ \ \ \text{and} \ \ \ 
\xi_i \,\xi_j \,\xi_k 
= \nfrac{27}{4} \nfrac{J_0}{J_0-1}
= \nfrac{\xi_k^{\,3}}{\xi_k + 1},
\end{equation*}
and then $\xi_i$ and $\xi_j$ are written by $\xi_k$ as shown below:
\begin{equation*}
\xi_i,\, \xi_j
\,=\, 
\nfrac{\xi_k}{2}
\bigg( {-1} \pm \sqrt{\nfrac{\xi_k - 3}{\xi_k + 1}} \,\,\bigg).
\end{equation*}
Thus the ratios $\pm \frac{\,V_i}{\,V_j}$ and $\pm \frac{\,V_j}{\,V_i}$ 
are considered to be represented as follows:
\begin{equation*}
\pm\,\nfrac{V_i}{V_j},\,
\pm\,\nfrac{V_j}{V_i}
\ =\ 
\pm\,\sqrt{\nfrac{\xi_j}{\smash{\xi_i}}}\,,\, 
\pm\,\sqrt{\nfrac{\mathstrut \xi_i}{\smash{\xi_j}}}
\ =\ 
\pm\,\nfrac{\sqrt{\mathstrut \xi_k + 1} \,\pm \sqrt{\mathstrut \xi_k - 3}}{2}.
\end{equation*}
As shown in Section \ref{Sec:4}, 
the ambiguity arising from square roots are cleared up 
by standing on the octahedral covering space. 
\section{Characterization of octahedral covering space}\label{Sec:6}
In this section, 
we will give a characterization of our octahedral covering space 
$\PS[1]$ with the parameter ${(\alpha : \beta)}$. 
For this, we observe our octahedral polynomials 
from the viewpoint of the elliptic curve 
as the desingularization of the singular fiber $\kt{I}_0^*$. 
\par
Let $\C/\Omega_0$ be a complex torus 
that corresponds to the elliptic curve 
having the $J$-value $J_0$ of the singular fiber $\kt{I}_0^*$. 
We denote by $e_1(\Omega_0), e_2(\Omega_0), e_3(\Omega_0)$ 
the values of the $\wp$-function $\wp(z, \Omega_0)$ 
at the three $2$-division points of $\C/\Omega_0$. 
Then the following equalities are satisfied: 
\begin{equation*}
\begin{array}{@{}l@{}}
e_1(\Omega_0) + e_2(\Omega_0) + e_3(\Omega_0) = 0, \\[1ex]
e_1(\Omega_0) \, e_2(\Omega_0) 
 + e_1(\Omega_0) \, e_3(\Omega_0) 
   + e_2(\Omega_0) \, e_3(\Omega_0) 
 = - \frac{\,1\,}{4} \, g_2(\Omega_0), \\[1ex]
e_1(\Omega_0) \, e_2(\Omega_0) \, e_3(\Omega_0)
 = \frac{\,1\,}{4} \, g_3(\Omega_0).
\end{array}
\end{equation*}
Also, by the attached coefficients of our polynomials, 
we have the following equalities:
\begin{equation*}
J_0 = \nfrac{F^3}{E^2}
    = \nfrac{F^3}{F^3 - 27 (V^2)^2},
\qquad
\begin{array}{@{}l@{}}
V_1^{\,2} + V_2^{\,2} + V_3^{\,2} = 0, \\[1ex]
V_1^{\,2} V_2^{\,2} 
 + V_1^{\,2} V_3^{\,2} 
   + V_2^{\,2} V_3^{\,2} = - \frac{\,1\,}{4} F, \\[1ex]
V_1^{\,2} V_2^{\,2} V_3^{\,2} = \frac{\,1\,}{4} V^2.
\end{array}
\end{equation*}
Thus, our polynomials would be considered to be having the values 
corresponding to the following objects 
for the complex torus $\C/\Omega_0$ of the $J$-value $J_0$:
\begin{equation*}
\begin{array}{@{}l@{\ \ \qquad}l@{\ \ \qquad}l@{}}
V \leftrightarrow \sqrt{g_3(\Omega_0)},
 & V_1 \leftrightarrow \sqrt{e_1(\Omega_0)}, 
   & E_1 \leftrightarrow\, e_2(\Omega_0) - e_3(\Omega_0), \\[1ex]
E \leftrightarrow \sqrt{\Delta(\Omega_0)},
 & V_2 \leftrightarrow \sqrt{e_2(\Omega_0)},
   & E_2 \leftrightarrow\, e_3(\Omega_0) - e_1(\Omega_0), \\[1ex]
F \leftrightarrow \ \ \ g_2(\Omega_0),
 & V_3 \leftrightarrow \sqrt{e_3(\Omega_0)},
   & E_3 \leftrightarrow\, e_1(\Omega_0) - e_2(\Omega_0),
\end{array}
\end{equation*}
in which we denote the discriminant 
by ${\Delta(\Omega_0) = g_2(\Omega_0)^3 - 27 g_3(\Omega_0)^2}$.
\par
In the above correspondence, 
there are ambiguities concerning the labels of the $2$-division points 
and the square roots of their $\wp$-values. 
By choosing a fundamental basis of the lattice $\Omega_0$, 
the $2$-division points are labeled in a usual manner. 
Then the special linear group $\mathrm{SL}_2(\Z)$ permutes the labels 
through the transformations of the fundamental basis, 
while the principal congruence subgroup of level $2$ 
acts trivially on the labels. 
The complex torus $\C/\Omega_0$ is isomorphic 
to the double covering space of the singular fiber $\kt{I}_0^*$, 
and the four branch points in the central component 
are considered to be having the values 
$e_1(\Omega_0), e_2(\Omega_0), e_3(\Omega_0)$ and $\infty$. 
Then the six cross ratios among them 
are represented by our polynomials as shown below:
\begin{equation*}
\begin{array}{@{}c@{}}
-\nfrac{E_2}{E_3}\,, \ 
-\nfrac{E_3}{E_2}\,, \ 
-\nfrac{E_1}{E_3}\,, \ 
-\nfrac{E_3}{E_1}\,, \ 
-\nfrac{E_1}{E_2}\,, \ 
-\nfrac{E_2}{E_1}. \\[2.5ex]
\Big( \ \,           \lambda \,, \ 
          \nfrac{1}{\lambda} \,, \ 
                   1-\lambda \,, \ 
        \nfrac{1}{1-\lambda} \,, \  
  \nfrac{\lambda-1}{\lambda} \,, \ 
  \nfrac{\lambda}{\lambda-1}. \ \,\Big)
\end{array}
\end{equation*}
They belong to the function field 
of our intermediate covering space $\PS[1]$ 
with ${(V_1^{\,2} : V_2^{\,2})}$ 
lying over the base space ${\MS = \PS[1]}$ 
with ${(a:b) = (E^2 : F^3)}$. 
This covering space of degree $6$ is considered 
to be the (compactified) modular curve 
of the principal congruence subgroup of level $2$. 
\par
The ambiguity for the square roots seems to be more subtle. 
For our octahedral covering space $\PS[1]$ with ${(\alpha : \beta)}$, 
each of the six vertices has ramification index $4 \,({>}\,2)$ 
lying over the point of ${J_0 = 1}$ 
(not over ${J_0 = \infty}$). 
Hence, the covering space is not considered 
directly to be the modular curve 
of the principal congruence subgroup of level $4$. 
Under the fundamental basis of $\Omega_0$, 
we could not a priori choose one of the square roots 
for each $\wp$-value of the $2$-division points. 
\par
In one sense, the ambiguity for the square roots 
is equivalent to mark up the zeros of the $\wp$-function 
by $\pm z_0$ in $\C/\Omega_0$. 
In fact, for the $2$-division points, 
we have the following three functions 
(with the representation by the Weierstrass $\sigma$-functions):
\begin{equation*}
\hspace{2em}
\sqrt{\wp(z, \Omega_0) - e_k(\Omega_0)} \ 
\bigg({=}\,\nfrac{\sigma_k(z, \Omega_0)}{\sigma(z, \Omega_0)} \bigg)
\hspace{3em} (k=1,2,3),
\end{equation*}
where $\sigma(z, \Omega_0)$ is the following quasi-periodic function 
defined on the complex plane $\C$, 
and $\sigma_k(z, \Omega_0)\,(k=1,2,3)$ are 
the usual modifications of $\sigma(z, \Omega_0)$ by half-periods:
\begin{equation*}
\sigma(z, \Omega_0) 
 = z \prod_{0 \ne \omega \in \Omega_0} \!
     \Big( 1 - \nfrac{z}{\omega} \Big)
     \exp \Big( \nfrac{z}{\omega} + \nfrac{1}{2}\nfrac{z^2}{\omega^2} \Big).
\end{equation*}
Each of the above square roots gives an elliptic function 
of a certain isogenous torus of degree $2$ over $\C/\Omega_0$.  
The zero point $z_0$ in $\C/\Omega_0$ 
determines two points in the isogenous torus, 
one of which we can systematically choose 
under the fundamental basis of $\Omega_0$. 
Then we consider $\sqrt{e_k(\Omega_0)}$ multiplied by $\sqrt{-1}$ 
to be the value of the elliptic function 
at the chosen point. 
An explicit formula for the zeros of $\wp$-function
is described by Eichler and Zagier in \cite{EZ}. 
\par
From another point of view, 
the ambiguity for the square roots is explained 
in association with the complex torus $\C/\Omega$ 
determined by the following $J$-value 
and the $\wp$-values of the $2$-division points:
\begin{equation*}
J = \nfrac{J_0}{J_0 - 1},
\hspace{3em}
\begin{array}{@{}l@{}}
e_1(\Omega) = - \frac{\,1\,}{3} 
                \big(\, e_2(\Omega_0) - e_3(\Omega_0) \big), \\[1ex]
e_2(\Omega) = - \frac{\,1\,}{3} 
                \big(\, e_3(\Omega_0) - e_1(\Omega_0) \big), \\[1ex]
e_3(\Omega) = - \frac{\,1\,}{3} 
                \big(\, e_1(\Omega_0) - e_2(\Omega_0) \big).
\end{array}
\end{equation*}
By using this associated torus $\C/\Omega$, 
we have the following correspondence:
\begin{equation*}
\begin{array}{@{}l@{}}
V_1 \leftrightarrow \sqrt{e_1(\Omega_0)} 
    = \sqrt{e_2(\Omega) - e_3(\Omega)}, \\[1.25ex]
V_2 \leftrightarrow \sqrt{e_2(\Omega_0)}
    = \sqrt{e_3(\Omega) - e_1(\Omega)}, \\[1.25ex]
V_3 \leftrightarrow \sqrt{e_3(\Omega_0)} 
    = \sqrt{e_1(\Omega) - e_2(\Omega)}.
\end{array}
\end{equation*}
The ambiguity is now controlled 
by choosing a fundamental basis of the lattice $\Omega$. 
As mentioned in the above discussion of the square roots, 
we can similarly determine the square roots 
in a systematic way under the chosen basis. 
They are described in association 
with the $4$-division points of $\C/\Omega$. 
Then the principal congruence subgroup of level $4$ 
acts trivially on the square roots 
through the transformations of the fundamental basis of $\Omega$. 
\par
We now recall our octahedral covering space 
$\PS[1]$ with ${(\alpha : \beta)}$, 
which is embedded into $\PS[2]$ by ${(V_1 : V_2 : V_3)}$. 
Its defining relation of the image 
is transformed to the following form as an affine model:
\begin{equation*}
V_1^{\,2} + V_2^{\,2} + V_3^{\,2} = 0 
\ \ \Leftrightarrow \ \   
\Big( \iu \nfrac{V_1}{V_3} \Big)^2 + \Big( \iu \nfrac{V_2}{V_3} \Big)^2 = 1.
\end{equation*}
This is represented 
in terms of the associated torus $\C/\Omega$ as shown below:
\begin{equation*}
(\kappa)^2 + (\kappa')^2 = 1 
\ \ \ \bigg(\, 
\kappa  = \nfrac{\sqrt{e_3(\Omega) - e_2(\Omega)}}
                {\sqrt{e_1(\Omega) - e_2(\Omega)}}, \,\,
\kappa' = \nfrac{\sqrt{e_1(\Omega) - e_3(\Omega)}}
                {\sqrt{e_1(\Omega) - e_2(\Omega)}}
\,\bigg).
\end{equation*}
These ratios $\kappa, \kappa'$ are called 
the elliptic modulus and its complementary modulus, 
which have the following relationship 
to the parameter ${\rho = \frac{\alpha}{\,\beta\,}}$ 
of the octahedral covering space:
\begin{equation*}
(\, \kappa,\, \kappa'\,)
 = \Big( \iu \nfrac{V_1}{V_3}, \iu \nfrac{V_2}{V_3} \,\Big)
 = \Big(\, \nfrac{2\rho}{1+\rho^2},\, \nfrac{1-\rho^2}{1+\rho^2} \,\Big)
\ \ \leftrightarrow \ \ 
\rho = \nfrac{\alpha}{\beta} = \nfrac{\kappa}{1+\kappa'}.
\end{equation*}
To sum up the above discussion, by standing on the viewpoint 
of the $J$-value ${J = \frac{J_0}{\,J_0-1\,}}$, 
our octahedral covering space is characterized 
as the space of the elliptic modulus and its complementary modulus, 
in other words, 
the modular curve of the principal congruence subgroup of level $4$. 
\section*{Appendix}%
%
For benefit of readers, 
we mention the action of the (binary) octahedral group 
on the polynomials ${V_k \,(k=1,2,3)}$. 
In Section \ref{Sec:4}, 
the polynomial $V_k$ is given by a homogeneous form of degree $2$ 
defined on $\C^2$ with coordinates ${(\alpha, \beta)}$. 
The binary octahedral group acts on $\C^2$ 
by unitary transformations of determinant $1$. 
For an element $\sigma$ in the binary octahedral group, 
we denote its contragradient action on $V_k$ by $\sigma^*(V_k)$, 
that is, the composite mapping ${V_k \,\msii{\circ}\, \sigma^{-1}}$. 
By the even homogeneity of $V_k$, 
the transformation $-\sigma$ acts on it 
as the same action of $\sigma$, 
(and thus the octahedral group itself acts on $V_k$).
\begin{table}[htb]
\centering\normalsize
\caption{Action of binary octahedral group}%
\label{GroupAction}%
$
\renewcommand{\arraystretch}{1.1}
\begin{array}{@{\ }c@{\,} *{3}{| @{\ }c@{\,}} |}
\sigma & \sigma^*(V_1) & \sigma^*(V_2) & \sigma^*(V_3) \\
\hline
\hline
\pm 
\big(\! \begin{smallmatrix}
          1  &  0 \\ 
          0  &  1
    \end{smallmatrix} \!\big)  
 &  V_1  &  V_2  &  V_3  \\
\hline
\pm 
\big(\! \begin{smallmatrix}
          \iu  & \phantom{-}0 \\ 
            0  &  -\iu
    \end{smallmatrix} \!\big)  
 & V_1  & -V_2  & -V_3  \\
\pm 
\big(\! \begin{smallmatrix}
            0  & \iu \\ 
          \iu  &   0
    \end{smallmatrix} \!\big)  
 & -V_1  & V_2  & -V_3  \\
\pm 
\big(\! \begin{smallmatrix}
           0  &           -1 \\ 
           1  & \phantom{-}0
    \end{smallmatrix} \!\big)  
 & -V_1  & -V_2  & V_3  \\[.25ex]
\hline
\smash{\pm\frac{\,1\,}{2}}
\big(\! \begin{smallmatrix}
         1+\iu  &           -1+\iu \\ 
         1+\iu  & \phantom{-}1-\iu
    \end{smallmatrix} \!\big)  
 & V_2  & V_3  & V_1  \\
\smash{\pm\frac{\,1\,}{2}}
\big(\! \begin{smallmatrix}
         \phantom{-}1-\iu  & 1-\iu \\ 
                   -1-\iu  & 1+\iu
    \end{smallmatrix} \!\big)  
 & V_3  & V_1  & V_2  \\
\smash{\pm\frac{\,1\,}{2}}
\big(\! \begin{smallmatrix}
         \phantom{-}1+\iu  & 1-\iu \\ 
                   -1-\iu  & 1-\iu
    \end{smallmatrix} \!\big)  
 & -V_2  & V_3  & -V_1  \\
\smash{\pm\frac{\,1\,}{2}}
\big(\! \begin{smallmatrix}
         1-\iu  &           -1+\iu \\ 
         1+\iu  & \phantom{-}1+\iu
    \end{smallmatrix} \!\big)  
 & -V_3  & -V_1  & V_2  \\
\smash{\pm\frac{\,1\,}{2}}
\big(\! \begin{smallmatrix}
         \phantom{-}1-\iu  & 1+\iu \\ 
                   -1+\iu  & 1+\iu
    \end{smallmatrix} \!\big)  
 & -V_2  & -V_3  & V_1  \\
\smash{\pm\frac{\,1\,}{2}}
\big(\! \begin{smallmatrix}
         1+\iu  &           -1-\iu \\ 
         1-\iu  & \phantom{-}1-\iu
    \end{smallmatrix} \!\big)  
 & V_3  & -V_1  & -V_2  \\
\smash{\pm\frac{\,1\,}{2}}
\big(\! \begin{smallmatrix}
         1-\iu  &           -1-\iu \\ 
         1-\iu  & \phantom{-}1+\iu
    \end{smallmatrix} \!\big)  
 & V_2  & -V_3  & -V_1  \\
\smash{\pm\frac{\,1\,}{2}}
\big(\! \begin{smallmatrix}
         \phantom{-}1+\iu  & 1+\iu \\ 
                   -1+\iu  & 1-\iu
    \end{smallmatrix} \!\big)  
 & -V_3  &  V_1  & -V_2  \\[.25ex]
\hline
\end{array}
 \ \ \ 
\begin{array}{@{\ }c@{\,} *{3}{| @{\ }c@{\,}} |}
\sigma & \sigma^*(V_1) & \sigma^*(V_2) & \sigma^*(V_3) \\
\hline
\hline
\smash{\pm\frac{1}{\sqrt{2}}}
\big(\! \begin{smallmatrix}
      1+\iu  &     0 \\ 
          0  & 1-\iu
    \end{smallmatrix} \!\big)  
 & V_1  & V_3  & -V_2  \\
\smash{\pm\frac{1}{\sqrt{2}}}
\big(\! \begin{smallmatrix}
      1-\iu  &     0 \\ 
          0  & 1+\iu
    \end{smallmatrix} \!\big)  
 & V_1  & -V_3  & V_2  \\
\smash{\pm\frac{1}{\sqrt{2}}}
\big(\! \begin{smallmatrix}
            1  & \iu \\ 
          \iu  &   1
    \end{smallmatrix} \!\big)  
 & -V_3  &  V_2  & V_1  \\
\smash{\pm\frac{1}{\sqrt{2}}}
\big(\! \begin{smallmatrix}
         \phantom{-}1  &         -\iu \\ 
                 -\iu  & \phantom{-}1
        \end{smallmatrix} \!\big)  
 & V_3  & V_2  & -V_1  \\
\smash{\pm\frac{1}{\sqrt{2}}}
\big(\! \begin{smallmatrix}
          1  &           -1 \\ 
          1  & \phantom{-}1
    \end{smallmatrix} \!\big)  
 & V_2  & -V_1  & V_3  \\
\smash{\pm\frac{1}{\sqrt{2}}}
\big(\! \begin{smallmatrix}
         \phantom{-}1  & 1 \\ 
                   -1  & 1
    \end{smallmatrix} \!\big)  
 & -V_2  & V_1  & V_3  \\[.25ex]
\hline
\smash{\pm\frac{1}{\sqrt{2}}}
\big(\! \begin{smallmatrix}
             0  &      -1+\iu \\ 
         1+\iu  & \phantom{-}0
    \end{smallmatrix} \!\big)  
 & -V_1  & V_3  & V_2  \\
\smash{\pm\frac{1}{\sqrt{2}}}
\big(\! \begin{smallmatrix}
         \phantom{-}0  & 1+\iu \\ 
               -1+\iu  &     0
    \end{smallmatrix} \!\big)  
 & -V_1  & -V_3  & -V_2  \\
\smash{\pm\frac{1}{\sqrt{2}}}
\big(\! \begin{smallmatrix}
         \iu  &   -1 \\ 
           1  & -\iu
    \end{smallmatrix} \!\big)  
 & V_3  & -V_2  & V_1  \\
\smash{\pm\frac{1}{\sqrt{2}}}
\big(\! \begin{smallmatrix}
         \phantom{-}\iu  & \phantom{-}1 \\ 
                     -1  &         -\iu
    \end{smallmatrix} \!\big)  
 & -V_3  & -V_2  & -V_1  \\
\smash{\pm\frac{1}{\sqrt{2}}}
\big(\! \begin{smallmatrix}
         \iu  & \phantom{-}\iu \\
         \iu  &           -\iu
    \end{smallmatrix} \!\big)  
 & V_2  & V_1  & -V_3  \\
\smash{\pm\frac{1}{\sqrt{2}}}
\big(\! \begin{smallmatrix}
         \phantom{-}\iu  & -\iu \\ 
                   -\iu  & -\iu
    \end{smallmatrix} \!\big)  
 & -V_2  & -V_1  & -V_3  \\[.25ex]
\hline
\end{array}$%
\end{table}
\par
In Table \ref{GroupAction}, 
we write up the action of the binary octahedral group, 
where we denote unitary transformations of $\C^2$ 
by ${2 \times 2}$ unitary matrices in a usual sense. 
Under the group action, 
the polynomials ${\pm V_1,\, \pm V_2,\, \pm V_3}$ 
are permuted with the same behaviour of the vertices
${\frac{\alpha}{\,\beta\,} = \infty, 0,\, \pm 1,\, \pm \iu}$ 
in the embedded octahedron of the Riemann sphere 
${\PS[1] = \C \cup \{ \infty \}}$. 
For the transformations written in the left-side list of the table, 
the upper block corresponds 
to the identity transformation of the octahedron, 
and the middle block corresponds 
to the rotations of angle $\pi$ around vertices, 
and the lower block corresponds 
to the rotations of angle $\frac{2\pi}{3}$ around central points of faces. 
For the transformations written in the right-side list, 
the upper block corresponds 
to the rotations of angle $\frac{\pi}{\,2\,}$ around vertices, 
and the lower block corresponds 
to the rotations of angle $\pi$ around middle points of edges. 
\par
As mentioned in Section \ref{Sec:4}, 
the polynomial $F$ is completely fixed, 
but $V$ and $E$ may be changed to $-V$ and $-E$ 
by the action of the binary octahedral group. 
They are just fixed by the transformations 
written in the left-side list of Table \ref{GroupAction}. 
These transformations form the binary tetrahedral group 
as the normal subgroup of index $2$. 
By the upper two blocks of the left-side list, 
we have the quaternion group as the normal subgroup of index $6$. 
\par
For the action of a finite group $G$ of linear transformations, 
its Hilbert-Poincar\'e series $P(t)$ 
of the graded ring of invariant polynomials
is defined by the formal power series 
whose coefficients are the numbers of linearly independent invariants 
for homogeneous polynomials of respective degrees. 
By Molien's formula (cf.\ \cite[p.13]{Mukai}), 
it is equal to the average of reciprocals 
for reversed characteristic polynomials 
with respect to the group action, 
that is, we have the following equality:
\begin{equation*}
P(t) = \nfrac{1}{|G|} \sum_{A \in G} \nfrac{1}{\det (I-tA)},
\end{equation*}
where the group $G$ consists of linear transformations represented by matrices 
(with the identity element represented by $I\,$), 
and $|G|$ is the number of the elements in $G$. 
\par
For the action of the quaternion group, 
its Hilbert-Poincar\'e series of the invariant ring 
is equal to the following rational function: 
\begin{equation*}
\nfrac{1}{8} 
\Big\{
\nfrac{1}
      {(1-t)^2} 
 + \nfrac{1}
         {(1+t)^2} 
 + 6\,\nfrac{1}
            {1+t^2} 
\Big\}
= \nfrac{1+t^6}
        {(1-t^4)^2}
= \nfrac{1-t^{12}}
        {(1-t^4)^2 (1-t^6)}.
\end{equation*}
It means that the invariant ring is generated 
by two polynomials of degree $4$ and one polynomial of degree $6$. 
We can choose $V_1^{\,2}, V_2^{\,2}$ and $V$ as these polynomials, 
which have the following relation 
(associated with the rational double singularity of type $\RS{D}_4$):
\begin{equation*}
V^2 = 4\, V_1^{\,2} V_2^{\,2} V_3^{\,2}
    = 4\, V_1^{\,2} V_2^{\,2} (-V_1^{\,2}-V_2^{\,2}).
\end{equation*}
For the action of the binary tetrahedral group, 
we have the following rational function 
as the Hilbert-Poincar\'e series of the invariant ring:
\begin{equation*}
\begin{array}{@{}l@{}}
\nfrac{1}{24} 
\Big\{
 8\,\nfrac{1+t^6}
          {(1-t^4)^2}
 + 8\,\nfrac{1}
            {1-t+t^2} 
 + 8\,\nfrac{1}
            {1+t+t^2} 
\Big\} \\[2ex]
\ \,= \nfrac{1+t^{12}}
          {(1-t^6)(1-t^8)}
  = \nfrac{1-t^{24}}
          {(1-t^6)(1-t^8)(1-t^{12})}.
\end{array}
\end{equation*}
For the action of the binary octahedral group, 
we have the following:  
\begin{equation*}
\begin{array}{@{}l@{}}
\nfrac{1}{48} 
\Big\{
 24\,\nfrac{1+t^{12}}
           {(1-t^6) (1-t^8)} 
 + 6\,\nfrac{1}
            {1-\sqrt{2}\,t+t^2}
 + 6\,\nfrac{1}
            {1+\sqrt{2}\,t+t^2} 
 + 12\,\nfrac{1}
             {1+t^2}
\Big\} \\[2ex]
\ \,= \nfrac{1+t^{18}}
          {(1-t^8)(1-t^{12})}
  = \nfrac{1-t^{36}}
          {(1-t^8)(1-t^{12})(1-t^{18})}.
\end{array}
\end{equation*}
Thus, we can see that 
the invariant rings are generated 
by $V, F, E$ for the tetrahedral group action 
and by $F, V^2, V\! E$ for the octahedral group action 
together with the relations mentioned in Section \ref{Sec:4}.
%
%
\addtocontents{toc}{\setcounter{tocdepth}{-1}}
\section*{Acknowledgments}
\addtocontents{toc}{\setcounter{tocdepth}{2}}
I had many discussions with Professor Isao Naruki, 
and I received many pieces of helpful advice from him. 
I would like to express here my deepest gratitude to him. 
%
%
%
%
\providecommand{\bysame}{\leavevmode\hbox to3em{\hrulefill}\thinspace}
\providecommand{\MR}{\relax\ifhmode\unskip\space\fi MR }
\providecommand{\MRhref}[2]{%
  \href{http://www.ams.org/mathscinet-getitem?mr=#1}{#2}
}
\providecommand{\href}[2]{#2}

%
%
\end{document}